\documentclass[12pt]{amsart}
\usepackage{amssymb,amsfonts,amsmath,graphicx,lineno}
\usepackage{mathrsfs}
\usepackage{anysize}
\usepackage[colorlinks, citecolor=blue, urlcolor=blue]{hyperref}
\usepackage{url}
\usepackage[leqno]{amsmath}
\usepackage{enumitem}
\theoremstyle{plain}
\newtheorem{theorem}{Theorem}[section]
\newtheorem{lemma}[theorem]{Lemma}
\newtheorem{corollary}[theorem]{Corollary}
\newtheorem{proposition}[theorem]{Proposition}
\theoremstyle{definition}

\theoremstyle{remark}
\newtheorem{remark}[theorem]{Remark}
\newtheorem{example}[theorem]{Example}

\newcommand{\N}{\mathbb{N}}
\newcommand{\Z}{\mathbb{Z}}

\title[Topological transitivity and mixing composition of operators]{Topological transitivity and mixing of composition operators}
\subjclass[2010]{Primary 47A16, 47B33; Secondary 37A25 }
\keywords{}

\medskip
\linespread{1.3}
\marginsize{2.5cm}{2.5cm}{1cm}{2cm}
\setlist[itemize]{leftmargin=1cm}

\DeclareRobustCommand{\rchi}{{\mathpalette\irchi\relax}}
\newcommand{\irchi}[2]{\raisebox{\depth}{$#1\chi$}} 

\begin{document}

\maketitle

\centerline{\scshape Fr\'ed\'eric Bayart}

{\footnotesize
	\centerline{Laboratoire de Math\'ematiques Blaise Pascal}
	\centerline {Universit\'e Clermont Auvergne}
	\centerline{F-63000 Clermont-Ferrand, France.}
	
	\centerline{Frederic.Bayart@uca.fr} }

\smallskip

\centerline{\scshape Udayan B. Darji}

{\footnotesize
	\centerline{Department of Mathematics, University of Louisville}
	\centerline {Louisville, KY 40292, USA}
	\centerline{and}
	\centerline{Ashoka University, Rajiv Gandhi Education City}
	\centerline {Kundli, Rai 131029, India}
	\centerline{ubdarj01@gmail.com} }

\smallskip

\centerline{\scshape Benito Pires \footnote{Partially supported by  S\~ao Paulo Research Foundation (FAPESP) grant \# 2015/20731-5.}}

{\footnotesize
	\centerline{Departamento de Computa\c c\~ao e Matem\'atica, Faculdade de Filosofia, Ci\^encias e Letras}
	\centerline {Universidade de S\~ao Paulo, 14040-901, Ribeir\~ao Preto - SP, Brazil}
	\centerline{benito@usp.br} } 

\marginsize{2.5cm}{2.5cm}{1cm}{2cm}

  \begin{abstract}  Let $X=(X,\mathcal{B},\mu)$ be a $\sigma$-finite measure space and \mbox{$f:X\to X$}  be a measurable transformation 
  such that the composition operator $T_f:\varphi\mapsto \varphi\circ f$ is a bounded  linear operator acting on $L^p(X,\mathcal{B},\mu)$, $1\le p<\infty$.
  We provide a necessary and sufficient condition on $f$ for $T_f$ to be topologically transitive or topologically mixing.
  We also characterize the topological dynamics of composition operators induced by weighted shifts, non-singular odometers and inner functions.
  The results provided in this article hold for composition operators acting on more general Banach spaces of functions. 
    \end{abstract}
    
\section{Introduction}
It is widely known that bounded linear operators acting on  infinite dimensional Banach spaces  may present chaotic behaviour (see \cite{BM,grosse}). Special attention has been devoted to the  study of chaos in the sense of Devaney and Li-Yorke (see \cite{bernardes2, bernardes1, grivaux}). One of the key ingredients of chaos is the notion of topological transitivity. More specifically, a bounded linear operator $T$ acting on a Banach space $\mathcal X$ is {\it topologically transitive} if for any pair $U,V\subset \mathcal X$ of nonempty open sets, there exists an integer $k\ge 0$ such that $T^k(U)\cap V\neq\emptyset$. When $\mathcal X$ is a separable Banach space, topological transitivity is equivalent to being $hypercyclic$, that is, to the existence of a dense  $T$-orbit $\textrm{Orb}\,(\varphi,T)=\{\varphi,T\varphi,T^2\varphi,\ldots\}.$

Here we are interested in the dynamics of the composition operator $T_f:\varphi\mapsto \varphi\circ f$ 
 acting on a Banach space of functions $\mathcal X$. This is a subject that already appeared in the literature for regular functions, like
 holomorphic functions (see for instance \cite{JHS}) or smooth functions (see \cite{AP}). Here, we focus on measurable functions and $L^p$-spaces. More precisely,
 let $(X,\mathcal B,\mu)$ be a $\sigma$-finite measure space and $\left(\mathcal{X},\left\Vert\cdot\right\Vert\right)\subset L^0(\mu)$ be a Banach space of functions defined on $X$. We will always assume that $\mu(X)>0$ and that $\mathcal X$ is a lattice: if $\psi_1,\psi_2$ are measurable functions with $|\psi_1|\leq |\psi_2|$ and 
$\psi_2\in \mathcal{X}$, then $\psi_1\in \mathcal{X}$ and $\left\Vert \psi_1\right\Vert\le \left\Vert \psi_2\right\Vert$. We will say that $\mathcal{X}$ is \emph{admissible}
provided it satisfies the following assumptions
\begin{enumerate}
\item[(H1)] For any $A\in\mathcal B$ with finite measure, the function $\rchi_A$ belongs to $\mathcal{X}$;
\item[(H2)] The set of simple functions which vanish outside a set of finite measure is dense in $\mathcal{X}$;
\item[(H3)] For all $\epsilon>0$, there exists $\delta>0$ such that, for all $\psi\in \mathcal X$, for all $S\in\mathcal B$, $|\psi|\geq 1$ on $S$ and $\|\psi\|\leq\delta$ imply $\mu(S)<{\epsilon}/{2}$;
\item[(H4)] For all $\eta>0$ and for all $M>0$, there exists $\epsilon>0$ such that for all $\psi\in\mathcal X$,
 for all $S\in \mathcal B$, $\mu(S)<\epsilon$, $\psi=0$ on $X\backslash S$ and $\left\vert \psi\right\vert\le 2M$ imply $\|\psi\|<{\eta}/{2}$.
\end{enumerate}  
It is clear that $L^p$-spaces are admissible (in particular, (H3) follows from Markov's inequality). Throughout the paper, we will assume that $f:X\to X$ is nonsingular (namely $\mu\big(f^{-1}(S)\big)=0$
whenever $\mu(S)=0$). This ensures that $\varphi\circ f$ is well-defined for every $\varphi\in L^0(\mu)$. A necessary and suffficient condition for boundedness on $L^p(\mu)$ is the existence of some $c>0$
such that $\mu\big(f^{-1}(B)\big)\leq c\mu(B)$ for all $B\in\mathcal B$ (see \cite{RM}).

Our first result is a necessary and sufficient condition for the composition operator $T_f$ to be topologically transitive. We do not make any extra assumption: neither
 $X$ need to have finite measure nor $f$ has to bimeasurable or injective. 

\begin{theorem}\label{thm1}
Let $(X,\mathcal B,\mu)$ be a $\sigma$-finite measure space and $\mathcal X$ be an admissible Banach space of functions defined on $X$.
Let $f:X\to X$ be measurable such that the composition operator $T_f:\varphi\mapsto\varphi\circ f$ is bounded on $\mathcal X$.
 Then the following assumptions are equivalent:
\begin{enumerate}
\item [(A1)]  $T_f$ is topologically transitive;
\item [(A2)] $f^{-1}(\mathcal B)=\mathcal B$ and, for all $\epsilon>0$, for all $A\in\mathcal B$ with finite measure, there exist $B\subset A$ measurable, $k\geq 1$ and $C\in\mathcal B$ such that 
\begin{equation*}
\mu(A\backslash B)<\epsilon,\ \mu(f^{-k}(B))<\epsilon,\ f^k(B)\subset C\textrm{ and }\mu(C)<\epsilon.
\end{equation*}
\end{enumerate}
\end{theorem}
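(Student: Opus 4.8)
The plan is to prove the two implications separately, reducing throughout to \emph{simple} functions supported on sets of finite measure, which are dense by (H2), and to read the measure-theoretic content of the norm estimates through (H3) and (H4).

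For (A2)$\Rightarrow$(A1) I would fix simple functions $u,v$ supported on a common finite-measure set and a tolerance $\epsilon>0$, and seek $\varphi$ and $k\ge1$ with $\|\varphi-u\|$ and $\|\varphi\circ f^k-v\|$ small. Applying the hypothesis to $A=\mathrm{supp}\,u\cup\mathrm{supp}\,v$ yields $B\subset A$, an integer $k\ge1$ and $C$ with the four smallness properties. Since $f^{-1}(\mathcal B)=\mathcal B$ iterates to $f^{-k}(\mathcal B)=\mathcal B$, the function $v$ is $f^{-k}(\mathcal B)$-measurable, so a Doob--Dynkin factorization gives a measurable $g$ with $|g|\le\|v\|_\infty$ and $v=g\circ f^k$. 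I would then test the candidate $\varphi=u\,\rchi_{B\setminus C}+g\,\rchi_{C}$. A direct computation shows $\varphi-u$ is supported on $(A\setminus B)\cup C$ while $\varphi\circ f^k-v$ is supported on $f^{-k}(B)\cup(A\setminus B)$, the inclusion $B\subset f^{-k}(C)$ coming from $f^k(B)\subset C$ being exactly what confines the error of the second expression to $A\setminus B$. Both supports have measure $<2\epsilon$ and both functions are bounded by $2\max(\|u\|_\infty,\|v\|_\infty)$, so (H4) makes both norms small. Thus each condition plays a definite role: $\mu(A\setminus B)$ and $\mu(C)$ small keep $\varphi$ near $u$, while $\mu(f^{-k}(B))$ small and $f^k(B)\subset C$ keep $\varphi\circ f^k$ near $v$.

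For (A1)$\Rightarrow$(A2) I would first establish $f^{-1}(\mathcal B)=\mathcal B$. A topologically transitive operator has dense range: choosing a nonempty open $V$ disjoint from the proper closed subspace $\overline{\mathrm{Ran}\,T_f}$ and a nonempty open $U$ disjoint from $V$ gives $T_f^k(U)\subset\overline{\mathrm{Ran}\,T_f}$ for $k\ge1$ and $U\cap V=\emptyset$ for $k=0$, contradicting transitivity. Every element of $\mathrm{Ran}\,T_f$ is $f^{-1}(\mathcal B)$-measurable, and by (H1) the indicator of any finite-measure set lies in $\mathcal X$; approximating such an indicator in norm and using (H3) to pass to convergence in measure shows the indicator is $f^{-1}(\mathcal B)$-measurable modulo null sets, whence $f^{-1}(\mathcal B)=\mathcal B$.

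The quantitative part of (A2) is extracted from transitivity applied to balls around $\rchi_A$ and $0$. Pairing $U$ around $\rchi_A$ with $V$ around $0$ produces $\varphi\approx\rchi_A$ with $\varphi\circ f^k$ of small norm; setting $B_1=\{x\in A:|\varphi(x)-1|<1/2\}$ gives $\mu(A\setminus B_1)<\epsilon$, and since $|\varphi\circ f^k|\ge1/2$ on $f^{-k}(B_1)$, (H3) also gives $\mu(f^{-k}(B_1))<\epsilon$. Pairing $U$ around $0$ with $V$ around $\rchi_A$ produces a small-norm $\psi$ with $\psi\circ f^{k'}\approx\rchi_A$; setting $B_2=\{x\in A:\psi\circ f^{k'}(x)>1/2\}$ gives $\mu(A\setminus B_2)<\epsilon$ and $f^{k'}(B_2)\subset C:=\{|\psi|\ge1/2\}$ with $\mu(C)<\epsilon$. \textbf{The main obstacle} is that these two extractions rest on different witnesses, hence a priori on different powers $k\ne k'$, whereas (A2) demands one power and one set $B$ serving both the preimage and the image estimate. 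This cannot be cured with a single witness, since the image estimate forces $\varphi\circ f^k$ to be large on $B$ with $\varphi$ small in norm, while the preimage estimate forces exactly the opposite; by (H3) a function of small norm that is $\approx\rchi_A$ somewhere of positive measure is impossible. I would therefore upgrade transitivity to weak mixing, i.e. show that $T_f$ satisfies the Hypercyclicity Criterion (equivalently that $T_f\oplus T_f$ is transitive), so that the return sets of the two ball-pairs intersect and a common power $k=k'$ may be chosen; the lattice structure together with (H1)--(H4) is what should make this upgrade available. With a common $k$ in hand one takes $B=B_1\cap B_2$, and the four inequalities follow from the preceding estimates, the remaining bookkeeping being routine applications of (H3) and (H4).
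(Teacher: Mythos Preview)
Your treatment of (A2)$\Rightarrow$(A1) is essentially the paper's argument, with the Doob--Dynkin factorisation playing the role of the paper's direct choice of sets $C_j$ with $f^{-k}(C_j)=B_j\cap B$; this direction is fine.

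The gap is in (A1)$\Rightarrow$(A2). You correctly identify the obstacle: pairing $(\rchi_A,0)$ and $(0,\rchi_A)$ produces two different witnesses and two different powers $k,k'$, while (A2) demands a single $k$ and a single $B$. Your proposed fix---upgrade transitivity to weak mixing so that the return sets intersect---is not carried out, and there is no evident noncircular route to it here: the natural way to verify the Hypercyclicity Criterion for $T_f$ would itself pass through (A2). Since transitivity does not imply weak mixing for general operators, the sentence ``the lattice structure together with (H1)--(H4) is what should make this upgrade available'' is a hope, not an argument.

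The paper avoids the obstacle by a different, elementary device: instead of the pairs $(\rchi_A,0)$ and $(0,\rchi_A)$, it uses the single pair $(2\rchi_A,4\rchi_A)$. Transitivity then yields one $\varphi$ and one $k$ with $\|\varphi-2\rchi_A\|\le\delta$ and $\|\varphi\circ f^k-4\rchi_A\|\le\delta$. The two level sets $D=\{x\in A:|\varphi(x)-2|<1\}$ and $C=\{x:|\varphi(x)-4|<1\}$ are automatically disjoint (the levels $2$ and $4$ are more than $2$ apart), and one takes $B=D\cap f^{-k}(C)$. Then (H3) simultaneously gives $\mu(A\setminus B)<\epsilon$, $\mu(C)<\epsilon$, and $\mu(f^{-k}(B))<\epsilon$ (the last because on $f^{-k}(B)$ one has $|\varphi\circ f^k-2|<1$, hence $|\varphi\circ f^k-4\rchi_A|\ge1$), while $f^k(B)\subset C$ holds by construction. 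The separation of scalar values is what lets a single transitivity witness encode both the ``image'' and the ``preimage'' estimates at once; this is the idea missing from your proposal.
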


We mention that a version of Theorem \ref{thm1} has been already given by Kalmes in \cite[Theorem 2.4]{Kal07} under additional assumptions. More specifically, he assumes that $X$ is $\sigma$-compact,
 $\mu$ is locally finite and $f:X\to X$ is injective and continuous. As we show throughout this article, there are plenty of topologically transitive composition operators
 whose underlying map $f:X\to X$ is not continuous or is defined on a space $X$ that it is not $\sigma$-compact.
  
\smallskip
  
 Another key ingredient of chaos is the notion of topological mixing. A bounded linear operator $T:\mathcal X\to \mathcal X$ of a Banach space $\mathcal X$ is {\it topologically mixing}
 if for any pair $U,V\subset \mathcal X$ of nonempty open sets, there exists an integer $k_0\ge 0$ such that $T^k(U)\cap V\neq\emptyset$ for every $k\ge k_0$. 
 Our second result is the following.
 
 \begin{theorem}\label{thm2}
Let $(X,\mathcal B,\mu)$ be a $\sigma$-finite measure space and $\mathcal X$ be an admissible Banach space of functions defined on $X$.
Let $f:X\to X$ be measurable such that the composition operator $T_f:\varphi\mapsto\varphi\circ f$ is bounded on $\mathcal X$.
 Then the following assumptions are equivalent:
\begin{enumerate}
\item [(B1)] $T_f$ is topologically mixing;
\item [(B2)] $f^{-1}(\mathcal B)=\mathcal B$ and, for all $\epsilon>0$,  for all $A\in\mathcal B$ with finite measure, there exist $k_0\geq 1$ and two sequences $(B_k), (C_k)$ of measurable subsets of $X$
such that, for all $k\geq k_0$,
$$
\mu(A\backslash B_k)<\epsilon, \,\, \mu(f^{-k}(B_k))<\epsilon, \,\, f^k(B_k)\subset C_k\textrm{  and  } \mu(C_k)<\epsilon.
$$
\end{enumerate}
\end{theorem}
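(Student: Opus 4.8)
The plan is to mirror the proof of Theorem \ref{thm1}, upgrading the single exponent $k$ to a whole tail $k\ge k_0$. By (H2) it suffices to test topological mixing on balls $U=B(g,\epsilon)$, $V=B(h,\epsilon)$ centred at simple functions $g,h$ vanishing outside sets $A_g,A_h$ of finite measure, and the axioms (H3)--(H4) will be used throughout to pass between norm estimates and measure estimates.

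For (B2)$\Rightarrow$(B1) I would fix such $g,h,\epsilon$ and apply (B2) to $A=A_g\cup A_h$, obtaining $k_0$ and, for every $k\ge k_0$, sets $B_k\subset A$ and $C_k$ satisfying the four inequalities. Since $f^{-1}(\mathcal B)=\mathcal B$ forces $f^{-k}(\mathcal B)=\mathcal B$, the Doob--Dynkin lemma lets me write $h=\hat h\circ f^k$ for some measurable $\hat h$, which may be taken bounded by $\|h\|_\infty$. I would then set
\[
\varphi_k=g\,\rchi_{B_k}+\hat h\,\rchi_{C_k}
\]
and verify the two estimates for all $k\ge k_0$. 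The bound $\|\varphi_k-g\|<\epsilon$ holds because $g-g\rchi_{B_k}$ is supported on $A\setminus B_k$ and $\hat h\rchi_{C_k}$ on $C_k$, both of measure $<\epsilon$, so (H4) applies. For the second bound one computes $\varphi_k\circ f^k-h=(g\rchi_{B_k})\circ f^k-h\,\rchi_{X\setminus f^{-k}(C_k)}$; the first term is supported on $f^{-k}(B_k)$ and the second, using $f^k(B_k)\subset C_k$ (hence $B_k\subset f^{-k}(C_k)$), on $A_h\setminus B_k$, both of measure $<\epsilon$, so (H4) again gives $\|\varphi_k\circ f^k-h\|<\epsilon$. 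Thus $T_f^k(U)\cap V\neq\emptyset$ for every $k\ge k_0$.

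For the converse (B1)$\Rightarrow$(B2), the equality $f^{-1}(\mathcal B)=\mathcal B$ follows exactly as in Theorem \ref{thm1}, since mixing implies transitivity. To build the sequences I would fix $\epsilon,A$ and apply the mixing hypothesis to \emph{two} pairs of open sets. Applied to $U=B(\rchi_A,\delta)$, $V=B(0,\delta)$ it yields $k_1$ such that for each $k\ge k_1$ there is $\varphi_k$ with $\varphi_k\approx\rchi_A$ and $\varphi_k\circ f^k\approx 0$; putting $B_k^{(1)}=\{x\in A:|\varphi_k|\ge 1/2\}$ and using (H3) gives $\mu(A\setminus B_k^{(1)})<\epsilon$ and $\mu(f^{-k}(B_k^{(1)}))<\epsilon$. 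Applied to $U=B(0,\delta)$, $V=B(\rchi_A,\delta)$ it yields $k_2$ and functions $\psi_k$ with $\psi_k\approx 0$, $\psi_k\circ f^k\approx\rchi_A$; putting $C_k=\{|\psi_k|\ge 1/2\}$ and $B_k^{(2)}=A\cap f^{-k}(C_k)$ gives, again by (H3), $\mu(C_k)<\epsilon$, $\mu(A\setminus B_k^{(2)})<\epsilon$ and $f^k(B_k^{(2)})\subset C_k$. For $k\ge k_0:=\max(k_1,k_2)$ I take $B_k=B_k^{(1)}\cap B_k^{(2)}$ together with $C_k$; the four conditions follow since $\mu(f^{-k}(B_k))\le\mu(f^{-k}(B_k^{(1)}))$ and $f^k(B_k)\subset f^k(B_k^{(2)})\subset C_k$ (after shrinking $\delta$ so each piece is $<\epsilon/2$).

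The main obstacle is precisely this last combination. A single pair of open sets controls only one of the two images: the source estimate $\varphi_k\approx\rchi_A$ governs the backward image $f^{-k}(B_k)$, whereas producing a \emph{small} $C_k$ containing the forward image $f^k(B_k)$ requires a vector that is small yet maps onto $\rchi_A$. In the transitive setting these two demands are met at possibly different exponents and cannot be aligned, which is why Theorem \ref{thm1} asserts only the existence of one $k$. Topological mixing dissolves this difficulty: it furnishes \emph{both} kinds of vector at \emph{every} large exponent simultaneously, so the intersection $B_k^{(1)}\cap B_k^{(2)}$ is available for all $k\ge k_0$. The remaining work is the routine tracking of the constants $\delta,\epsilon$ through (H3) and (H4), carried out as in Theorem \ref{thm1}.
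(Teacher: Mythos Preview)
Your proof is correct, and the implication (B2)$\Rightarrow$(B1) is essentially the paper's argument for Theorem~\ref{thm1} transplanted to all large $k$: your use of Doob--Dynkin to produce $\hat h$ with $h=\hat h\circ f^k$ is just an abstract rephrasing of the paper's choice of sets $C_j$ with $f^{-k}(C_j)=B_j\cap B$, and the additive test function $g\rchi_{B_k}+\hat h\rchi_{C_k}$ differs from the paper's piecewise definition only on the overlap $B_k\cap C_k\subset C_k$, which is harmless.

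For (B1)$\Rightarrow$(B2), however, you take a genuinely different route. The paper needs only \emph{one} pair of balls, centred at $2\rchi_A$ and $4\rchi_A$: because the values $2$ and $4$ are well separated, a single function $\varphi$ close to $2\rchi_A$ with $\varphi\circ f^k$ close to $4\rchi_A$ simultaneously yields both the set $D=\{x\in A:|\varphi-2|<1\}$ controlling the backward image and the set $C=\{|\varphi-4|<1\}$ controlling the forward image; one then takes $B=D\cap f^{-k}(C)$. You instead invoke mixing on two separate pairs $(\rchi_A,0)$ and $(0,\rchi_A)$ and intersect the resulting sets. Both arguments are valid, but the paper's single-pair trick is more economical and, crucially, is exactly what drives Theorem~\ref{thm1} in the merely transitive case. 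Your closing remark that in the transitive setting ``these two demands are met at possibly different exponents and cannot be aligned'' is therefore slightly off: they \emph{can} be aligned at a common $k$ via the $2\rchi_A/4\rchi_A$ device---that is the content of (A1)$\Rightarrow$(A2). What fails without mixing is only the upgrade to \emph{all} large $k$, not the alignment itself.
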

Conditions (A2) and (B2) are simplified when we add extra-assumptions on $f$ or $X$. For instance, if $f$ is one-to-one and bimeasurable, 
the condition $f^{-1}(\mathcal B)=\mathcal B$ is automatically satisfied
and we do not need the set $C$ anymore since we know that $f^{k}(B)$ belongs to $\mathcal B$.
If we assume moreover that the measure $\mu$ is finite, then Theorem \ref{thm1} simplifies into

\begin{corollary}\label{cthm1} 
Let $(X,\mathcal B,\mu)$ be a finite measure space and $\mathcal X$ be an admissible Banach space of functions defined on $X$.
Let $f:X\to X$ be one-to-one, bimeasurable and such that the composition operator $T_f:\varphi\mapsto\varphi\circ f$ is bounded on $\mathcal X$.
 Then the following assumptions are equivalent:
\begin{itemize}
\item [(C1)] $T_f$ is topologically transitive;
\item [(C2)] For each $\epsilon>0$, there exist a measurable set $B$ and $k\ge 1$ such that
$$
\mu(X{\setminus} B)< \epsilon\quad \textrm{and}\quad B\cap f^k(B)=\emptyset.$$
\end{itemize}  
 \end{corollary}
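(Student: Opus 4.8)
The plan is to deduce the corollary from Theorem~\ref{thm1} by showing that, under the present hypotheses, condition (A2) collapses to (C2). First I would record the two simplifications of (A2) noted just before the statement. Since $f$ is one-to-one and bimeasurable, for any $A\in\mathcal B$ one has $f^{-1}(f(A))=A$ with $f(A)\in\mathcal B$, whence $\mathcal B\subseteq f^{-1}(\mathcal B)$ and, the reverse inclusion being measurability, $f^{-1}(\mathcal B)=\mathcal B$ automatically; moreover $f^k(B)\in\mathcal B$ for every measurable $B$, so the auxiliary set $C$ in (A2) may always be taken to be $f^k(B)$. Because $\mu(X)<\infty$, every set has finite measure, so (A2) reformulates as: for all $\epsilon>0$ and all $A\in\mathcal B$ there exist $B\subseteq A$ and $k\ge 1$ with $\mu(A\setminus B)<\epsilon$, $\mu(f^{-k}(B))<\epsilon$ and $\mu(f^k(B))<\epsilon$. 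It then suffices to prove that this reformulated condition is equivalent to (C2).

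For the implication (C2) $\Rightarrow$ (A2), I would fix $\epsilon>0$ and $A\in\mathcal B$, take the set $\tilde B$ and integer $k$ furnished by (C2), and put $B=A\cap\tilde B$. The key observation is that the single disjointness relation $\tilde B\cap f^k(\tilde B)=\emptyset$ controls both images at once: it gives directly $f^k(\tilde B)\subseteq X\setminus\tilde B$, and if $x\in\tilde B$ with $f^k(x)\in\tilde B$ then $f^k(x)\in f^k(\tilde B)\cap\tilde B=\emptyset$, a contradiction, so also $f^{-k}(\tilde B)\subseteq X\setminus\tilde B$. Since $B\subseteq\tilde B$, all three of $f^k(B)$, $f^{-k}(B)$ and $A\setminus B$ are then contained in $X\setminus\tilde B$, which has measure less than $\epsilon$; taking $C=f^k(B)$ yields (A2).

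For the converse (A2) $\Rightarrow$ (C2), I would fix $\epsilon>0$, apply the reformulated (A2) with $A=X$ and parameter $\epsilon/2$ to obtain $B$ and $k$ with $\mu(X\setminus B)<\epsilon/2$ and $\mu(f^{-k}(B))<\epsilon/2$, and then set $B'=B\setminus f^{-k}(B)$. A short computation gives $\mu(X\setminus B')\le\mu(X\setminus B)+\mu(f^{-k}(B))<\epsilon$. The disjointness is immediate: any $x\in B'$ satisfies $x\notin f^{-k}(B)$, i.e.\ $f^k(x)\notin B$, and since $B'\subseteq B$ this forces $f^k(x)\notin B'$; hence $B'\cap f^k(B')=\emptyset$, which is exactly (C2). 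Together with the equivalence (A1) $\Leftrightarrow$ (A2) from Theorem~\ref{thm1}, this closes the chain (C1) $\Leftrightarrow$ (A2) $\Leftrightarrow$ (C2).

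I expect no serious obstacle here: the mathematical content lies in the two set constructions and in the elementary observation that the single relation $\tilde B\cap f^k(\tilde B)=\emptyset$ simultaneously forces both $f^k(\tilde B)$ and $f^{-k}(\tilde B)$ into the small set $X\setminus\tilde B$, so that the forward and backward smallness conditions of (A2) are obtained for free. The hypotheses that $f$ is one-to-one and bimeasurable enter only through the preliminary reduction, ensuring $f^{-1}(\mathcal B)=\mathcal B$ and the measurability of $f^k(B)$, while the finiteness of $\mu$ is what permits the choice $A=X$ in the converse.
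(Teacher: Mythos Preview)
Your proof is correct and follows essentially the same route as the paper's: both directions reduce to Theorem~\ref{thm1} via the same set manipulations, with the key observation that $B\cap f^k(B)=\emptyset$ is equivalent to $f^{-k}(B)\cap B=\emptyset$. The only cosmetic difference is that for (A2)$\Rightarrow$(C2) the paper subtracts $f^k(B')$ rather than $f^{-k}(B)$, exploiting the $\mu(f^k(B'))<\epsilon/2$ part of (A2) instead of the $\mu(f^{-k}(B))<\epsilon/2$ part you use; both work equally well.
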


Condition (C2) is the measurable version of the {\it run-away property} that appears in the study of  hypercyclic composition operators acting on the space of smooth functions.
When the measure $\mu$ is $\sigma$-finite and $\mu(X)=\infty$, Condition (C2) is sufficient but not always necessary for the topological transitivity of $T_f$.

The proofs of Theorems \ref{thm1}, \ref{thm2} and Corollary \ref{cthm1} are given
 in Section \ref{proofsmain}.
Section \ref{sec:examples} is devoted to examples.
The first one is very classical: it is the weighted backward shift on weighted $\ell^p$-spaces. As already observed by Kalmes, the famous theorem of Salas \cite{S1}
falls into our context. We nevertheless recall the result to give an example of a topologically transitive composition operator on a space with infinite measure which does 
not satisfy Condition (C2). We then study classical classes of composition operators, like nonsingular odometers or those induced by inner functions. We also exhibit several
more specific examples pointing out the relevance of our assumptions. Finally, Section \ref{bili} is dedicated to composition operators induced by bi-Lipschitz $\mu$-transformations.

\section{Proofs of the main theorems and its corollaries}\label{proofsmain}

\begin{proof}[Proof of Theorem \ref{thm1}]  
We first show that (A1)$\Rightarrow$(A2). First of all, if $T_f$ is topologically transitive, then it has dense range and the condition $f^{-1}(\mathcal B)=\mathcal B$ characterizes composition
operators $T_f$ with dense range (see \cite[Corollary 2.2.8]{RM}). Given $\epsilon>0$, let $\delta>0$ be such that the claim of (H3)  holds.
Let $A\in\mathcal B$ have finite measure, then, by (H1), $\rchi_A\in \mathcal X$. Since $T_f$ is topologically transitive, there exist $k\geq 1$ and $\varphi\in \mathcal X$ such that
\begin{equation}\label{deltadelta}
\left\Vert\varphi-2\rchi_A\right\Vert\leq \delta\quad\textrm{and}\quad\left\Vert\varphi\circ f^k-4\rchi_A\right\Vert\le\delta.
\end{equation}
 Let us denote
\begin{equation*}
C=\big\{x\in X:\ |\varphi(x)-4|<1\big\}\quad\textrm{and}\quad D=\big\{x\in A:\ |\varphi(x)-2|<1\big\}.
\end{equation*}
By $(\ref{deltadelta})$ and (H3) applied to $\psi=\varphi-2\rchi_A$ and $S=A{\setminus} D$, we have  $\mu(A\backslash D)<\epsilon/2$. The same arguments applied 
to $\psi=\varphi\circ f^k-4\rchi_A$ and $S=A{\setminus} f^{-k}(C)$ yield $\mu(A\backslash f^{-k}(C))<\epsilon/2$.
Hence, denoting $B=D\cap f^{-k}(C)$,
we have $\mu(A\backslash B)<\epsilon$ as required.

Concerning the measure of $S=f^{-k}(B)$, observe that $\left\vert\varphi\circ f^k-2\right\vert<1$ on $S$. Hence, the function $\psi=\varphi\circ f^k - 4 \rchi_A$ satisfies $\left\vert \psi\right\vert\ge 1$ on $S$. Then, $(\ref{deltadelta})$ and (H3) leads to $\mu\left(f^{-k}(B)\right)<\epsilon$ as required.

Finally, by definition of $B$, we have that $f^k(B)\subset C$. Moreover, since $C\subset A\backslash D$, we conclude that $\mu(C)<\epsilon$.
 
Now let us prove that (A2)$\Rightarrow $(A1). Let $U,V$ be nonempty open subsets of $\mathcal X$. There exist $A\in\mathcal B$ with finite measure,
simple functions $\psi_1=\sum_i a_i \rchi_{A_i}$, $\psi_2=\sum_j b_j \rchi_{B_j}$ with $A_i,B_j\subset A$ and $\eta>0$ 
such that $B(\psi_1,\eta)\subset U$ and $B(\psi_2,\eta)\subset V$. Without loss of generality, we may assume that the sets $B_j$ are pairwise disjoint and 
we set $M=\max(\|\psi_1\|_\infty,\|\psi_2\|_\infty)$.
Let $\epsilon>0$ be such that the claim of (H4) holds. Let $B,C$ and $k$ be given by (A2). We first define
$\gamma_1=\sum_i a_i \rchi_{A_i\cap B}$ and $\gamma_2=\sum_j b_j\rchi_{B_j\cap B}$. Let $S=A\backslash B$, then $\mu(S)<\epsilon$ by (A2)
and $\gamma_{\ell}-\psi_{\ell}=0$ on $X\backslash S$ for all $1\le\ell\le 2$. Moreover, as $\left\vert \gamma_{\ell}-\psi_{\ell}\right\vert\le 2M$,
 (H4) yields
\begin{equation}\label{aux10}
\left\Vert \gamma_{1}-\psi_{1}\right\Vert<\dfrac{\eta}{2}\quad\textrm{and} \quad \left\Vert \gamma_{2}-\psi_{2}\right\Vert<\dfrac{\eta}{2}.
\end{equation}

Since $f^{-1}(\mathcal B)=\mathcal B$, we also have $f^{-k}(\mathcal B)=\mathcal B$
so that, for all $j$, there exist $C_j\in\mathcal B$ satisfying $f^{-k}(C_j)=B_j\cap B$.

We then define $\varphi\in \mathcal X$ by 
$$\varphi=\left\{
\begin{array}{ll}
 \gamma_1&\textrm{ on } B\backslash \bigcup_j C_j\\
 b_j&\textrm{ on each } C_j\\
 0&\textrm{ outside }B\cup\bigcup_j C_j
\end{array}
\right.$$
and we claim that $\varphi\in U$ whereas $\varphi\circ f^k\in V$. Indeed, 
$\varphi-\gamma_1=0$ except eventually on $\bigcup_j C_j\subset C$, where $|\varphi-\gamma_1|\leq 2M$. By (A2), $\mu(C)<\epsilon$. Applying (H4) we conclude that
$\|\varphi-\gamma_1\|<\eta/2$, which together with $(\ref{aux10})$ implies $\varphi\in U$.
In the same way, $\varphi\circ f^k-\gamma_2=0$ except eventually on $f^{-k}(B)$, where $\left\vert\varphi\circ f^k-\gamma_2\right\vert\leq 2M$. By (A2), $\mu\left(f^{-k}(B)\right)<\epsilon$. Applying (H4) yields
$\left\Vert\varphi\circ f^{-k}-\gamma_2\right\Vert<\eta/2$, which together with $(\ref{aux10})$ implies $\varphi\circ f^{-k}\in V$.
\end{proof}

The proof of Theorem \ref{thm1} also works as a proof of Theorem \ref{thm2}.

\begin{proof}[Proof of Corollary \ref{cthm1}] 
We first show that (C2)$\Rightarrow$ (A2). Note that $f^{-1}(\mathcal{B})=\mathcal{B}$ because $f$ is one-to-one and bimeasurable. Now pick $\epsilon>0$ and $A\in\mathcal B$ with finite measure. Let $B\in\mathcal B$ be such that $\mu(X\backslash B)<\epsilon$ and $B\cap f^k(B)=\emptyset$, then $\mu(A\backslash B)\leq \mu(X\backslash B)<\epsilon$
and $\mu\big(f^k(B)\big)\leq \mu(X\backslash B)<\epsilon$. The last inequality $\mu\big(f^{-k}(B)\big)<\epsilon$ follows from the equivalence
$$B\cap f^k(B)=\emptyset\iff f^{-k}(B)\cap B=\emptyset.$$ Hence, (C2)$\Rightarrow$ (A2) and by Theorem \ref{thm1}, (C2)$\Rightarrow$(C1). To prove the converse, suppose that $T_f$ is topologically transitive. Let $\epsilon>0$ and $A=X$, then by (A2) there exist $B'\in\mathcal{B}$ and $k\ge 1$ such that 
\begin{equation}\label{aux78}
\mu(X{\setminus} B')<\frac{\epsilon}{2}\quad\textrm{and}\quad  \mu\left( f^k(B')\right)<\frac{\epsilon}{2}.
\end{equation}
Set $B=B'{\setminus} f^k(B')$, then $(\ref{aux78})$ implies (C2) because
$$ \mu (X{\setminus} B)\le \mu  (X{\setminus} B') + \mu \left( f^k(B)\right)<\epsilon\quad\textrm{and}\quad B\cap f^k(B)=\emptyset.
$$

\end{proof}

\begin{remark}
 Upon the assumptions of Corollary \ref{cthm1}, Condition (C2) is also equivalent to any of the following conditions.
 \begin{itemize}
\item [(C3)] For each $\epsilon>0$, there exists a measurable set $B$ such that
$$ \mu(X{\setminus} B)< \epsilon\quad\textrm{and}\quad \liminf_{k\to\infty} \mu\left(B\cap f^k(B)\right)=0.
$$
\item [(C4)] For each $\epsilon>0$, there exists a measurable set $B$ such that
$$ \mu(X{\setminus} B)< \epsilon\quad\textrm{and}\quad \liminf_{k\to\infty} \mu\left( f^k(B)\right)=0.
$$
 \end{itemize}  
\end{remark}
 Condition (C3) implies that the underlying map $f$ is not light mixing (see \cite[p. 10]{KP}). Condition (C4) 
 implies that $\mu$ is neither invariant nor equivalent to an invariant finite measure (see \cite[p. 5]{EHIP}).

\begin{proof}

\noindent (C2)$\Rightarrow$(C3). Let $0<\epsilon<\frac{\mu(X)}{2}$. By (C2), for each $k\ge 1$, there exist $B_k\in\mathcal{B}$ and $n_k\ge 1$ such that
	\begin{equation*}
	\mu(X{\setminus} B_k)< \dfrac{
	\epsilon}{2^k}\quad\textrm{and}\quad  B_k\cap f^{n_k}(B_k)=\emptyset.
	\end{equation*}
Set $B=\cap_{k\ge 1} B_k$, then for every $k\ge 1$,
\begin{equation}\label{fr}
\mu\left(X{\setminus} B\right)<\epsilon\quad\textrm{and}\quad
\mu\left(f^{n_k}(B)\right)<\frac{\epsilon}{2^k}.
\end{equation}
We claim that $\{n_k:k\ge 1\}$ is an infinite set. By way of  contradiction, assume  that there exist $m\ge 1$ and infinitely many $k$'s such that 
$n_k=m$. Hence, by the second inequality in $(\ref{fr})$, we have that
  $\mu\left(f^m(B)\right)<\frac{\epsilon}{2^k}$ for infinitely many $k$'s, leading to $\mu\left(f^m(B)\right)=0$. On the other hand,
since $f$, hence $f^m$, is nonsingular, this implies $\mu(B)=0$, a contradiction with the first inequality in $(\ref{fr})$.
This proves the claim. The claim together with $(\ref{fr})$ yield $ \liminf_{k\to\infty} \mu\left(B\cap f^k(B)\right)=\liminf_{k\to\infty} \mu\left(f^k(B)\right)=0, $ showing that Condition (C3) holds.

\noindent (C3)$\Rightarrow$(C4). Let $\epsilon>0$. By (C3), for each $k\ge 1$, there exist $B_k\in\mathcal{B}$ and $n_k\ge k$ such that
	\begin{equation}\label{fr2}
	\mu(X{\setminus} B_k)< \dfrac{
	\epsilon}{2^k}\quad\textrm{and}\quad \mu\left(f^{n_k}(B_k)\cap B_k\right)< \dfrac{\epsilon}{2^k}.
	\end{equation}
Set $B=\cap_{k\ge 1} B_k$, then $(\ref{fr2})$ leads to $\mu\left(X{\setminus} B\right)=\mu\left(\cup_{k\ge 1} X{\setminus} B_k\right)<\epsilon$
and 
$$\mu\big(f^{n_k}(B)\big)\leq \mu\big(f^{n_k}(B_k)\big)\leq \mu\big(f^{n_k}(B_k)\cap B_k\big)+\mu(X\backslash B_k)<\frac{\epsilon}{2^{k-1}}.$$

\noindent (C4)$\Rightarrow$(C2). Let $\epsilon>0$. By (C4), there exist $B'\in \mathcal{B}$ and  $k\ge 1$ such that
$(\ref{aux78})$, and therefore (C2), holds with $B=B'{\setminus} f^k(B')$.
\end{proof}

It is possible to give a similar corollary for topologically mixing maps. We omit the proof.

\begin{corollary}\label{cthm2} Let $(X,\mathcal B,\mu)$ be a finite measure space and $\mathcal X$ be an admissible Banach space of functions defined on $X$.
Let $f:X\to X$ be one-to-one, bimeasurable and such that the composition operator $T_f:\varphi\mapsto\varphi\circ f$ is bounded on $\mathcal X$.
 Then the following assumptions are equivalent:
\begin{itemize}
\item [(D1)] $T_f$ is topologically mixing.
\item [(D2)] For each $\epsilon>0$, there exist $k_0\ge 1$ and measurable sets $\{B_k\}_{k=k_0}^\infty$ such that 
$$ \mu(X{\setminus} B_k)< \epsilon\quad\textrm{and}\quad B_k\cap f^k(B_k)=\emptyset\quad\textrm{for every}\quad k\ge k_0;
$$
\item [(D3)] For each $\epsilon>0$, there exist  measurable sets $\{B_k\}_{k=1}^\infty$ such that
$$ \mu(X{\setminus} B_k)< \epsilon\quad\textrm{and}\quad \lim_{k\to\infty}\mu\left(B_k\cap f^k(B_k)\right)=0;
$$
\item [(D4)] For each $\epsilon>0$, there exist measurable  sets $\{B_k\}_{k=1}^\infty$ such that
$$ \mu(X{\setminus} B_k)< \epsilon\quad\textrm{and}\quad \limsup_{k\to\infty}\mu\left(f^k(B_k)\right)<\epsilon.
$$
\end{itemize}
When the measure $\mu$ is $\sigma$-finite and $\mu(X)=\infty$, then any of the conditions (D2)-(D4) is sufficient but not always necessary for the topological mixing of $T_f$.
\end{corollary}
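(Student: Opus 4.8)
The plan is to mirror, almost verbatim, the proofs of Corollary~\ref{cthm1} and of the Remark that follows it, replacing the single set $B$ by a sequence $(B_k)$ and reading the index $k$ as the exponent of $f$ attached to the $k$-th term. Concretely, I would first establish (D1)$\Leftrightarrow$(D2) by reducing to Theorem~\ref{thm2}, and then close the loop (D2)$\Rightarrow$(D3)$\Rightarrow$(D4)$\Rightarrow$(D2). Throughout I use that $f$ one-to-one and bimeasurable gives $f^{-1}(\mathcal B)=\mathcal B$ (hence $f^{-k}(\mathcal B)=\mathcal B$ and $f^k(\mathcal B)\subset\mathcal B$), together with the elementary equivalence $B\cap f^k(B)=\emptyset\Leftrightarrow f^{-k}(B)\cap B=\emptyset$ already invoked in the proof of Corollary~\ref{cthm1}.

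For (D2)$\Rightarrow$(D1) I fix $\epsilon>0$ and $A\in\mathcal B$ of finite measure, take $k_0$ and $(B_k)_{k\ge k_0}$ as in (D2), and set $C_k=f^k(B_k)\in\mathcal B$. From $B_k\cap f^k(B_k)=\emptyset$ we get $f^k(B_k)\subset X\setminus B_k$, and from the equivalence above also $f^{-k}(B_k)\subset X\setminus B_k$; hence $\mu(A\setminus B_k)\le\mu(X\setminus B_k)<\epsilon$, $\mu(f^{-k}(B_k))<\epsilon$, $f^k(B_k)\subset C_k$ and $\mu(C_k)<\epsilon$ for all $k\ge k_0$, which is precisely (B2), so Theorem~\ref{thm2} yields (D1). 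For the converse (D1)$\Rightarrow$(D2) I apply (B2) with $A=X$ (legitimate since $\mu(X)<\infty$) and $\epsilon/2$, obtaining $k_0$, $(B_k')$ and $(C_k')$; setting $B_k=B_k'\setminus f^k(B_k')$ one checks exactly as in the passage from $(\ref{aux78})$ to (C2) that $B_k\cap f^k(B_k)=\emptyset$ and $\mu(X\setminus B_k)\le\mu(X\setminus B_k')+\mu(C_k')<\epsilon$ for every $k\ge k_0$.

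The cyclic implications are light. For (D2)$\Rightarrow$(D3) I keep the $B_k$ from (D2) for $k\ge k_0$ and set $B_k=X$ for $k<k_0$: then $\mu(X\setminus B_k)<\epsilon$ for every $k$, while $\mu(B_k\cap f^k(B_k))=0$ once $k\ge k_0$, so the limit is $0$. For (D3)$\Rightarrow$(D4) I apply (D3) with $\epsilon/2$ and reuse the same sequence; splitting $f^k(B_k)$ over $B_k$ and $X\setminus B_k$ gives $\mu(f^k(B_k))\le\mu(B_k\cap f^k(B_k))+\mu(X\setminus B_k)$, whence $\limsup_k\mu(f^k(B_k))\le\epsilon/2<\epsilon$. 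For (D4)$\Rightarrow$(D2) I apply (D4) with $\epsilon/2$; since $\limsup_k\mu(f^k(B_k'))<\epsilon/2$ there is $k_0$ with $\mu(f^k(B_k'))<\epsilon/2$ for $k\ge k_0$, and then $B_k=B_k'\setminus f^k(B_k')$ satisfies (D2) by the same $f$-injectivity computation as for Corollary~\ref{cthm1}.

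Finally, the $\sigma$-finite, $\mu(X)=\infty$ addendum. Sufficiency of each of (D2)-(D4) is free: none of the implications (D3)$\Rightarrow$(D4)$\Rightarrow$(D2)$\Rightarrow$(B2) used finiteness of $\mu$, only $\sigma$-finiteness, boundedness of $T_f$, and $f$ injective and bimeasurable, so each still forces (B2) and hence (D1) via Theorem~\ref{thm2}. The one place where finiteness was essential is the converse (D1)$\Rightarrow$(D2), which applied (B2) to $A=X$; this is unavailable when $\mu(X)=\infty$, and I expect this to be the only genuine obstacle. To show the conditions are then \emph{not} necessary one must exhibit a concrete topologically mixing $T_f$ failing (D2), the natural witness being a suitably weighted backward shift in the spirit of Salas, realized as a composition operator on an infinite-measure space, for which no set $B_k$ with $\mu(X\setminus B_k)<\epsilon$ can be disjoint from its $f^k$-image. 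I would therefore defer the non-necessity claim to the weighted-shift discussion of Section~\ref{sec:examples} rather than reprove it here.
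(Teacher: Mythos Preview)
Your proof is correct and is precisely the argument the paper has in mind: the authors explicitly omit the proof of Corollary~\ref{cthm2}, stating only that it is analogous to that of Corollary~\ref{cthm1} and the subsequent Remark, and your write-up carries out exactly this analogy. The only point worth flagging is the final ``not always necessary'' clause: the paper does not actually exhibit a topologically mixing example on an infinite-measure space violating (D2), so your deferral to Section~\ref{sec:examples} is appropriate in spirit, but be aware that Corollary~\ref{cor32} treats the transitive (non-mixing) case and a separate mixing example would still have to be produced.
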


\section{Examples}\label{sec:examples}
\subsection{Backward shift operators}  Well-studied classes of operators such as Rolewicz operators, weighted shift operators (see \cite{S1}) and the recently introduced
Rolewicz-type operators (see \cite{BonDarPia}) can be put into our framework. 

The backward shift on weighted $\ell^p$-spaces is the most natural example of composition operator. Let $X=\mathbb N$ or $X=\mathbb Z$ and $\nu=(\nu_i)_{i\in X}$
be a sequence of positive real numbers. Let $\sigma:X\to X,\ i\mapsto i+1$. The composition operator $T_\sigma$ is called a unilateral backward
shift if $X=\mathbb N$ and a bilateral backward shift if $X=\mathbb Z$. It is well-known that $T_{\sigma}$ is bounded on $\ell^p(X,\nu)=\{x\in \mathbb{C}^X;\ \|x\|^p=\sum_{i\in X}|x_i|^p\nu_i<\infty\}$ if and only if 
$\sup_{i\in X}\frac{\nu_i}{\nu_{i+1}}<\infty$.

The topological transitivity and topological mixing of $T_\sigma$ was characterized by Salas in \cite{S1} (cf. Theorem \ref{thm:bs} below). By proceeding as in Kalmes \cite[Theorem 2.8]{Kal07}, identifying $\ell^p(X,\nu)$ with $L^p(X,\mu)$,  where $ \mu=\sum_{i\in X}\nu_i\delta_i$, one can realize that the main theorem of \cite{S1} 
is a consequence of Theorems \ref{thm1} and \ref{thm2}. 

\begin{theorem}[Salas \cite{S1}]\label{thm:bs}
 Let $p\in[1,\infty)$, $X=\N$ or $X=\Z$, $\{\nu_i\}_{i\in X}$ be a sequence
	of positive real numbers such that $\sup_{i\in X}\frac{\nu_i}{\nu_{i+1}}<\infty$
	 and $\sigma:X\to X$ be the map defined by $\sigma(i)=i+1$. Then the following holds
	 	 \begin{itemize}
	 \item [$(a)$] In the case that $X =\N$, $T_\sigma$ is topologically transitive on $\ell^p(X,\nu)$ iff  $\liminf_{i\to \infty} \nu_i = 0$.
	 	 \item [$(b)$] In the case that $X = \Z$, $T_\sigma$ is topologically transitive on $\ell^p(X,\nu)$ iff there exists an increasing sequence of positive integers $(n_k)_{k\in\N}$ such that, for every $i \in \Z$, 
	 	 \[ \lim_{k \rightarrow \infty} {\nu_{i+n_k}}= 0 \ \ \textrm{and}  \ \  \lim_{k \rightarrow \infty} {\nu_{i-n_k}} = 0. \]
	 	\item [$(c)$] $T_{\sigma}$ is topologically mixing on $\ell^p(X,\nu)$ iff $\lim_{| i | \to \infty}  \nu_i=0.$
	  \end{itemize}
	 In particular, if $X$ is a finite measure space $\big($i.e. $\sum_{i\in X} \nu_i<\infty\big)$, then $T_{\sigma}$ is topologically mixing. 	
\end{theorem}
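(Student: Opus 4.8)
The plan is to realize $\ell^p(X,\nu)$ as the $L^p$-space $L^p(X,\mathcal B,\mu)$, where $\mathcal B$ is the power set of the countable set $X$ and $\mu=\sum_{i\in X}\nu_i\delta_i$, and then to read off conditions (A2) and (B2) of Theorems \ref{thm1} and \ref{thm2} in this concrete setting. Under this identification a set $A$ has finite measure precisely when $\sum_{i\in A}\nu_i<\infty$, and the boundedness hypothesis $\sup_i \nu_i/\nu_{i+1}=:M<\infty$ is exactly the condition ensuring $T_\sigma$ is bounded; it can be rephrased as $\nu_{i+1}\ge c\,\nu_i$ with $c=1/M$. Since $\sigma(i)=i+1$ gives $f^{k}(B)=B+k$ and $f^{-k}(B)=\{i:i+k\in B\}$, every image is measurable, so in (A2)/(B2) we may always take $C=f^k(B)$ and the conditions reduce to controlling $\mu(B+k)=\sum_{i\in B}\nu_{i+k}$ and $\mu(f^{-k}(B))$. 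Moreover $f^{-1}(\mathcal B)=\mathcal B$ holds automatically (trivially when $X=\Z$ since $\sigma$ is a bijection, and because every $T\subset\N$ equals $f^{-1}(T+1)$ when $X=\N$), so only the approximation conditions need checking.

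For the necessity directions I would test (A2)/(B2) on singletons. Taking $A=\{i_0\}$ and $\epsilon<\nu_{i_0}$ forces $B=\{i_0\}$, whence $\mu(f^k(B))=\nu_{i_0+k}<\epsilon$; letting $\epsilon\to0$ yields, in the transitive case, $\inf_{k\ge1}\nu_{i_0+k}=0$, i.e. $\liminf_{i\to\infty}\nu_i=0$ (part (a)), and in the mixing case $\nu_{i_0+k}<\epsilon$ for all large $k$, i.e. $\nu_i\to0$ as $i\to+\infty$. On $\Z$ the symmetric quantity $\mu(f^{-k}(B))=\nu_{i_0-k}$ produces the corresponding limit as $i\to-\infty$, giving $\lim_{|i|\to\infty}\nu_i=0$ for part (c). For the necessity in part (b) I would run (A2) with $A_m=\{-m,\dots,m\}$ and $\epsilon=1/m$, obtaining $B_m\subset A_m$ and $k_m\ge1$; since $\mu(A_m\setminus B_m)<1/m$ and every point carries mass $\ge\nu_i>1/m$ for large $m$, any fixed $i$ lies in $B_m$ eventually, and then $\nu_{i\pm k_m}\le\mu(f^{\pm k_m}(B_m))<1/m$. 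A short argument shows $k_m\to\infty$ (a value repeated infinitely often would force some $\nu_{i\pm k}=0$), so I extract a strictly increasing subsequence $(n_j)$ of $(k_m)$; this single sequence then satisfies $\nu_{i+n_j}\to0$ and $\nu_{i-n_j}\to0$ for every $i$.

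The sufficiency directions are verified by taking $B$ to be a finite truncation $F\subset A$ with $\mu(A\setminus F)<\epsilon$ and $F\subset\{-N,\dots,N\}$ (or $\{0,\dots,N\}$ for $\N$). For mixing (part (c)) this is immediate: as $k\to\infty$ the finite sums $\sum_{i\in F}\nu_{i\pm k}$ tend to $0$ because $\nu_j\to0$ at infinity, giving $k_0$ beyond which both are $<\epsilon$; the bilateral transitive case (part (b)) is identical using $k=n_j$ for $j$ large, since $\nu_{i\pm n_j}\to0$ for each of the finitely many $i\in F$. The one genuinely delicate point is the sufficiency in part (a): here $\liminf_{i\to\infty}\nu_i=0$ only gives smallness of $\nu$ along a subsequence, whereas $\mu(f^k(F))=\sum_{i\in F}\nu_{i+k}$ requires the \emph{whole} window $\{k,\dots,k+N\}$ to be small simultaneously, and forward weights may grow arbitrarily fast. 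This is exactly where the boundedness hypothesis is used: from $\nu_{i+1}\ge c\,\nu_i$ one gets the backward control $\nu_{j}\le M^{\,m-j}\nu_m$ for $j\le m$, so choosing $m>2N$ with $\nu_m$ tiny and setting $k=m-N$ makes every weight in the window $\{m-N,\dots,m\}$ at most $M^N\nu_m$ (note $M\ge1$ since $\liminf_{i\to\infty}\nu_i=0$), whence $\sum_{i\in F}\nu_{i+k}\le (N+1)M^N\nu_m<\epsilon$; moreover $k=m-N>N$ forces $f^{-k}(F)=\emptyset$, so the remaining condition is free. I expect this ``window-from-a-single-small-weight'' step to be the main obstacle, as it is the only place where $\sup_i\nu_i/\nu_{i+1}<\infty$ is used beyond mere boundedness. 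Finally, if $X$ has finite measure then $\sum_i\nu_i<\infty$ forces $\nu_i\to0$ as $|i|\to\infty$, so part (c) yields topological mixing at once.
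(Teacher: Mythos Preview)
Your proposal is correct and follows exactly the approach the paper indicates: identify $\ell^p(X,\nu)$ with $L^p(X,\mu)$ for $\mu=\sum_i\nu_i\delta_i$ and read off conditions (A2) and (B2) of Theorems~\ref{thm1} and~\ref{thm2}. The paper itself gives no details beyond this remark (it simply cites Kalmes and Salas), so your write-up supplies the verification the paper omits; in particular, your handling of the only nontrivial step---the sufficiency in part~(a), where the boundedness hypothesis $\nu_i\le M\nu_{i+1}$ is used to propagate a single small weight $\nu_m$ backward over the whole window $\{m-N,\dots,m\}$---is exactly what is needed and is correctly argued.
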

An interesting feature of  Theorem \ref{thm:bs} is that it allows us to exhibit an example of a topologically transitive composition operator which does not satisfy (C2),
showing that we cannot dispense with the assumption that $X$ has finite measure in Corollary \ref{cthm1}.

\begin{corollary}\label{cor32}
There exists a $\sigma$-finite measure space $(\N, 2^{\N},\mu)$ such that the map $\sigma:\N\to \N$  defined by $\sigma(i)=i+1$ is one-to-one, bimeasurable and its composition operator
$T_{\sigma}$ is topologically transitive but not topologically mixing. Moreover, $\sigma$ does not satisfy Condition (C2) of Corollary \ref{cthm1}.

\end{corollary}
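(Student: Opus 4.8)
The plan is to exhibit an explicit weight sequence $(\nu_i)_{i\in\N}$ and to set $\mu=\sum_{i\in\N}\nu_i\delta_i$, so that Salas's Theorem \ref{thm:bs} delivers transitivity and the failure of mixing for free, while the failure of (C2) is handled by a direct combinatorial argument. Identifying $\ell^p(\N,\nu)$ with $L^p(\N,\mu)$ as in the discussion preceding Theorem \ref{thm:bs}, all I need is a sequence of positive weights with $\sup_i \nu_i/\nu_{i+1}<\infty$ (so that $T_\sigma$ is bounded), with $\liminf_i \nu_i=0$ (so that $T_\sigma$ is transitive, by part $(a)$), with $\nu_i\not\to 0$ (so that $T_\sigma$ is not mixing, by part $(c)$), and with $\sum_i\nu_i=\infty$ (so that $\mu$ is $\sigma$-finite but has infinite total mass). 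Injectivity, bimeasurability and nonsingularity of $\sigma$ on $(\N,2^{\N},\mu)$ are immediate, since the $\sigma$-algebra is the full power set and every weight is positive.

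For the construction I would concatenate blocks $W_1,W_2,W_3,\dots$, where $W_n$ consists of a run of $n$ consecutive weights equal to $1$ followed by the geometric descent $2^{-1},2^{-2},\dots,2^{-n}$. The four numerical conditions are then routine: the only values taken by $\nu_i/\nu_{i+1}$ are $1$ (inside a run of $1$'s), $2$ (at the entry of a descent and inside a descent), and $2^{-n}$ (when jumping from the bottom of $W_n$ back up to the first $1$ of $W_{n+1}$), so that $\sup_i\nu_i/\nu_{i+1}=2$; the bottoms $2^{-n}$ force $\liminf_i\nu_i=0$; the weight-$1$ runs force $\limsup_i\nu_i=1$, hence $\nu_i\not\to 0$; and the runs alone contribute $\sum_n n=\infty$ to the total mass. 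This settles transitivity, non-mixing, $\sigma$-finiteness and the infinitude of $\mu$.

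The heart of the matter is the failure of (C2), and this is exactly what the block design is tailored for. Let $P=\{i:\nu_i=1\}$ be the union of the weight-$1$ runs. I would fix $\epsilon=1/2$ and argue by contradiction: if $B$ and $k\ge 1$ satisfied $\mu(\N\setminus B)<1/2$ and $B\cap\sigma^k(B)=\emptyset$, then, since each point of $P$ carries mass $1>1/2>\mu(\N\setminus B)$, no point of $P$ can lie in $\N\setminus B$, so $P\subseteq B$. The decisive feature is that $W_n$ contains a run of $n$ consecutive integers inside $P$; hence, for every $k\ge 1$, taking $n\ge k+1$ produces $p,p'\in P$ with $p'-p=k$. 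Then $p\in B$ gives $p'=p+k\in\sigma^k(B)$, while $p'\in P\subseteq B$ gives $p'\in B$, so $p'\in B\cap\sigma^k(B)$, contradicting disjointness. Thus (C2) fails for this space, which is what remains to be shown.

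The step I expect to be the genuine obstacle is reconciling the competing requirements on $(\nu_i)$: the bounded-ratio condition forbids isolated small weights, so the values witnessing $\liminf_i\nu_i=0$ must sit at the bottom of descents of growing width, and one must simultaneously keep the weight-$1$ set $P$ combinatorially rich enough (arbitrarily long runs of consecutive integers) that every difference $k\ge 1$ is realized within $P$. The block construction above is precisely what makes these two demands compatible; once the weights are fixed, all the verifications reduce to the elementary estimates indicated.
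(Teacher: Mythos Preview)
Your proof is correct, and it follows the same overall architecture as the paper: exhibit an explicit weight sequence, invoke Salas's Theorem for transitivity and the failure of mixing, and then dispose of (C2) by a combinatorial argument exploiting the block structure. The details, however, are genuinely different and worth contrasting.

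The paper uses ``down--up'' blocks $V_n=(1,2^{-1},\dots,2^{-n},2^{-(n-1)},\dots,2^{-1})$, so that \emph{both} ratios $\nu_i/\nu_{i+1}$ and $\nu_{i+1}/\nu_i$ lie in $[1/2,2]$; this makes $T_\sigma$ an invertible bounded operator, a feature your ``plateau--descent'' blocks $W_n$ lack (your ratio $\nu_{i+1}/\nu_i$ equals $2^{n}$ at the jump from $W_n$ to $W_{n+1}$). Invertibility is not claimed in the corollary, so this costs you nothing. For (C2), the paper proves the stronger statement that for \emph{every} $B$ with $\mu(\N\setminus B)<\infty$ and \emph{every} $k$, the set $B\cap\sigma^k(B)$ has \emph{infinite} measure; this forces a more delicate argument, since the points of weight $1$ are isolated (one per block) and the long consecutive runs inside $B$ have to be extracted indirectly, via a limiting argument on $d_n-c_n$. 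Your plateaus of weight $1$ buy you a dramatically shorter proof: any $B$ with $\mu(\N\setminus B)<1/2$ must swallow the entire plateau set $P$, and arbitrarily long arithmetic progressions inside $P$ finish the job in one line. The trade-off is that you only show $B\cap\sigma^k(B)\neq\emptyset$ for $\epsilon=1/2$; but that is exactly what the negation of (C2) requires, so nothing is lost for the corollary as stated.
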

		\begin{proof}
	Let $(\nu_i)_{i\in\N}$ be the sequence of positive real numbers
	\begin{equation}\label{sequence}
	\underbrace{1, \dfrac{1}{2}}_{V_1}, \underbrace{1,\dfrac{1}{2},\dfrac{1}{4}, \dfrac{1}{2}}_{V_2},\underbrace{1,\dfrac{1}{2},\dfrac{1}{4},\dfrac{1}{8},\dfrac{1}{4},\dfrac{1}{2}}_{V_3}, \ldots, \underbrace{1,\dfrac{1}{2},\ldots,\dfrac{1}{2^n},\dfrac{1}{2^{n-1}},\ldots \dfrac{1}{2}}_{V_n, \,\, n\ge 3}, \ldots
	\end{equation}
	Note that the general term $\nu_{i+1}$ is obtained from its antecessor $\nu_i$ by multiplication by $2$ or division by $2$, that is, 
	$$ \dfrac{1}{2}\le \dfrac{\nu_{i+1}}{\nu_{i}}\le 2,\quad \forall i\in\N.
	$$
	Hence, $T_\sigma$ is an invertible bounded linear operator from $L^p \big(X,\mu \big)$ into itself, where $\mu=\sum_i \nu_i \delta_i$. By $(\ref{sequence})$,  
	$ \liminf_{i\to\infty} \nu_i=0$. By the item (a) of Theorem \ref{thm:bs}, $T_{\sigma}$ is
	topologically transitive. On the other hand, we have that
	$\limsup_{i\to\infty} \nu_i=1$. Hence, by the item (c) of the same theorem, $T_{\sigma}$ is not topologically mixing.
	
	Now we will prove that $\sigma$ does not satisfy Condition (C2) in the statement of Corollary \ref{cthm1}. Specifically, let $B \subset \N$ be such that $\N{\setminus} B$ has finite measure. We will show that for all $k \in \N$, $B\cap \sigma^k(B) $ has infinite measure. 
	
	Let $V_1,V_2,\ldots$ be the blocks of real numbers defined in (\ref{sequence}). Denote by $| V_n|$ the cardinality of the set $V_n$. Let $\{I_n\}_{n\in\N}$ be the partition of $\N$ defined recursively by $I_1=\{1,2\}$ and
	$I_{n+1}=\{1+\max I_n,\ldots,|V_{n+1}|+\max I_n\}$, $n\ge 1$, that is,
	$$I_1=\{1,2\}, \, I_2=\{3,4,5,6\}, \, I_3=\{7,8,9,10,11,12\}, \ldots$$ 
	Let $\{c_n\}_{n\in \N}$ be the increasing sequence of positive integers defined  by $c_n=\min I_n$, then $\nu_{c_n}=1$ for every $n\in\N$. As $\mu (\N{\setminus} B) < \infty$, we have that $c_n\in B$ for all but finitely many $n$'s. Let $m_0\in\N$ be such that $c_n\in B$ for all $n\ge m_0$. For each $n \ge m_0$,  let $d_n \in I_n$ be the largest positive integer such that $[c_n,d_n] \subset B$. We claim that $\lim_{n\rightarrow \infty} (d_n -c_n) = \infty$. In fact, if this is false, then there is an increasing sequence $\{e_n\}_{n\in\N}\subset \N{\setminus B}$ such that $\mu(e_n)$ is a positive constant of the form $1/2^{\ell}$, contradicting the fact that $\mu (\N{\setminus} B) < \infty$. Fix $k \in \N$. Let $m_k \ge m_0$ be such that $d_n-c_n >2k$ for all $n \ge m_k$. Note that for any such $n$, we have that $\sigma^k(c_n) = c_n +k < d_n$ and hence $\sigma^k(c_n) \in B$. In this way, $\sigma^k(c_n) \in B \cap \sigma^k(B)$ for every $n\ge m_k$. Moreover, since $\sigma^k(c_n)=c_n+k<(c_n+d_n)/2$, we have that $\mu(\sigma^k(c_n)) = 1/2^k$. Hence we reach
	 \[\mu (B \cap f^k(B)) \ge  \sum_{n=m_k}^{\infty} \mu(\sigma^k(c_n)) = \sum_{n=m_k}^{\infty} \dfrac{1}{2^k} = \infty,  \]
	 proving that $\sigma$ does not satisfy (C2).
	 	  \end{proof}

\subsection{Composition by inner functions}

Let $\mathbb D$ be the complex unit disc and $\mathbb T=\partial \mathbb D$ be its boundary, the unit circle. 
We recall that the classical Fatou's Theorem asserts that a bounded holomorphic function $f:\mathbb D\to\mathbb C$
has radial limits almost everywhere. A holomorphic function $f:\mathbb D\to\mathbb D$ is called an \emph{inner} function
if the radial limits
$$f^*(\xi)=\lim_{r\to 1^-}f(r\xi)$$
have modulus $1$ for almost every $\xi\in\mathbb T$. Therefore, if $f$ is inner, the radial limits define a map $f^*:\mathbb T\to\mathbb T$ 
up to a set of zero Lebesgue measure. In what follows, we will simply denote by $f$ the function $f^*$.

It was proved by Nordgren \cite{Nor68} that an inner function $f$ induces a bounded composition operator $T_f$ on $L^2(\mathbb T,d\lambda)$ 
where $\lambda$ denotes the normalized Lebesgue measure on $\mathbb T$. The key point in Nordgren's argument is the following fact: for $\varphi\in L^1(\mathbb T)$,
denote by $P[\varphi]$ the Poisson integral of $\varphi$ defined on $\mathbb D$ by
$$P[\varphi](z)=\int_{\mathbb T}\mathrm{Re}\left(\frac{\xi+z}{\xi-z}\right)\varphi(\xi)d\lambda(\xi).$$
Then, for any $\varphi\in L^1(\mathbb T)$ and any inner function $f$, the relation 
$$P[\varphi\circ f]=P[\varphi]\circ f$$
holds. This lemma may be interpreted as a result about how inner functions transform the Poisson measures on $\mathbb T$
$$dm_\omega(\xi)=\mathrm{Re}\left(\frac{\xi+\omega}{\xi-\omega}\right)d\lambda(\xi),$$
where $\omega\in\mathbb D$. It says that $dm_\omega\circ f^{-1}=dm_{f(\omega)}$. 
Applying this for the particular case $\omega=0$, we thus get
$$\frac{1-|f(0)|}{1+|f(0)|}\|\varphi\|_2^2\leq \|T_f(\varphi)\|_2^2\leq \frac{1+|f(0)|}{1-|f(0)|}\|\varphi\|_2^2.$$

We are interested in the topological transitivity of $T_f$.

\begin{theorem}
 Let $f$ be an inner function. The following conditions are equivalent:
 \begin{itemize}
  \item[(i)] $T_f$ is topologically transitive on $L^2(\mathbb T)$.
  \item[(ii)] $T_f$ is topologically mixing on $L^2(\mathbb T)$.
  \item[(iii)] $f$ is an automorphism of the disk with no fixed point in $\mathbb D$.
 \end{itemize}
\end{theorem}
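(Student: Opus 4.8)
The plan is to prove the cycle $(iii)\Rightarrow(ii)\Rightarrow(i)\Rightarrow(iii)$, noting that $(ii)\Rightarrow(i)$ is immediate from the definitions. The two substantial arrows are $(iii)\Rightarrow(ii)$, where I produce the sets required by the mixing criterion, and $(i)\Rightarrow(iii)$, where I show that transitivity forces $f$ to be injective (hence an automorphism) and to have no interior fixed point.

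For $(iii)\Rightarrow(ii)$: if $f$ is an automorphism of $\mathbb D$, its boundary map $f^*$ is the restriction of a M\"obius transformation to $\mathbb T$, hence a diffeomorphism; in particular $f$ is one-to-one and bimeasurable and $(\mathbb T,\lambda)$ is a finite measure space, so I may invoke Corollary \ref{cthm2} and only need to verify its condition (D4). Since $f$ has no fixed point in $\mathbb D$, it is either parabolic (one fixed point $\tau\in\mathbb T$) or hyperbolic (an attracting fixed point $p$ and a repelling one $q$ on $\mathbb T$). Given $\epsilon>0$, I set $B=\mathbb T\setminus V$, where $V$ is a small open arc around $\tau$ (parabolic case) or around $q$ (hyperbolic case) with $\lambda(V)<\epsilon$, and take $B_k=B$ for all $k$. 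A direct analysis of the M\"obius dynamics --- conveniently carried out in the half-plane model, where the parabolic map becomes $w\mapsto w+1$ and the hyperbolic map a dilation $w\mapsto\rho w$ --- shows that the points of $B$ flow under forward iteration toward the attracting (resp. parabolic) fixed point, so that $f^k(B)$ concentrates there and $\lambda(f^k(B))\to 0$. Thus $\mu(\mathbb T\setminus B_k)<\epsilon$ and $\limsup_k\mu(f^k(B_k))=0<\epsilon$, which is exactly (D4); hence $T_f$ is topologically mixing.

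For $(i)\Rightarrow(iii)$ I argue in two independent steps. First, transitivity implies dense range, and by the criterion recalled in the proof of Theorem \ref{thm1} this gives $f^{-1}(\mathcal B)=\mathcal B$ (mod null sets). Any function of the form $\psi\circ f$ is constant on the fibers of $f^*$, so $f^{-1}(\mathcal B)$ consists exactly of those sets that are, modulo null sets, unions of fibers; the equality $f^{-1}(\mathcal B)=\mathcal B$ therefore forces $f^*$ to be essentially injective, and the valence theory of inner functions (a non-automorphism inner function is at least two-to-one a.e. on $\mathbb T$) then forces $f$ to be an automorphism of $\mathbb D$. Second, I rule out an interior fixed point: if $f(a)=a$ with $a\in\mathbb D$, pick $\phi\in\mathrm{Aut}(\mathbb D)$ with $\phi(0)=a$ and set $g=\phi^{-1}\circ f\circ\phi$, an inner function fixing $0$; then $T_g=T_\phi T_f T_\phi^{-1}$ is similar to $T_f$, so it suffices to contradict transitivity of $T_g$. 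From the Nordgren identity $dm_\omega\circ f^{-1}=dm_{f(\omega)}$ with $\omega=0$ one gets $g_*\lambda=m_{g(0)}=m_0=\lambda$, so $g$ is measure preserving and $T_g$ is an isometry of $L^2(\mathbb T)$; an isometry keeps every nonzero orbit on a fixed sphere and hence is never transitive. Combining the two steps, $f$ is an automorphism with no fixed point in $\mathbb D$, which is $(iii)$.

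The main obstacle is the injectivity step inside $(i)\Rightarrow(iii)$, namely passing from the measure-algebraic identity $f^{-1}(\mathcal B)=\mathcal B$ to the analytic conclusion that $f$ is univalent. The reduction to ``$f^*$ is essentially injective'' is elementary, but the assertion that a non-automorphism inner function is genuinely multivalued on a set of positive measure is precisely where inner-function theory (boundary valence, Frostman-type results) enters; everything else reduces to the already established finite-measure criterion of Corollary \ref{cthm2} and to elementary M\"obius dynamics. I would therefore isolate this valence statement as the key lemma of the argument.
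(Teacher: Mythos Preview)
Your argument is correct and the overall architecture matches the paper's: $(iii)\Rightarrow(ii)$ via explicit sets exploiting the attracting/repelling boundary dynamics of a non-elliptic automorphism, and $(i)\Rightarrow(iii)$ split into ``no interior fixed point'' (invariant Poisson measure forbids transitivity) and ``$f$ is an automorphism''. The first two pieces are essentially the same as the paper's, up to cosmetic choices (you use condition~(D4) of Corollary~\ref{cthm2} and remove one arc; the paper removes arcs around both Denjoy--Wolff points and checks $\lambda(f^{\pm k}(B))$ directly).

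The genuine divergence is in the injectivity step. The paper exploits a fact you do not use: the Nordgren two-sided estimate $\frac{1-|f(0)|}{1+|f(0)|}\|\varphi\|_2^2\le\|T_f\varphi\|_2^2$ shows $T_f$ has \emph{closed} range, so dense range upgrades to surjectivity and $T_f$ is invertible; one then cites that invertibility yields a measurable $g$ with $g\circ f^*=\mathrm{id}$ a.e., takes Poisson extensions to get $P[g]\circ f(z)=z$ on $\mathbb D$, and concludes $f$ is univalent, hence (via Frostman) a M\"obius automorphism. Your route bypasses closed range entirely: from $f^{-1}(\mathcal B)=\mathcal B$ on the standard space $\mathbb T$ you extract essential injectivity of $f^*$ by a separating-family argument, and then invoke a boundary-valence statement to pass to ``automorphism''. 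Both are valid. Your first move is in fact slightly sharper (it needs only dense range, not invertibility), but the valence statement you isolate as the ``key lemma'' is exactly what the paper's Poisson-extension trick proves: once $f^*$ is essentially injective one has a measurable left inverse $g$, and $P[g\circ f^*]=P[g]\circ f$ gives univalence on $\mathbb D$. So rather than citing valence theory as a black box, you could close the loop with the same two-line Poisson argument and Frostman's theorem that the paper uses.
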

\begin{proof}
 We first assume that $T_f$ is topologically transitive. Let us show that $f$ has no fixed point in $\mathbb D$. We argue by contradiction and assume that
 it has a fixed point $\omega\in\mathbb D$. By the discussion above, the associated Poisson measure $m_\omega$ is invariant under the action of $f$.
 Observe also that there exist two constants $c_1,c_2>0$ such that $c_1 m_\omega(B)\leq \lambda(B)\leq c_2 m_\omega(B)$
 for all measurable sets $B\subset\mathbb T$. 
 
Let now $\epsilon\in (0,1)$ and $B\subset\mathbb T$ be measurable such that $\lambda(\mathbb T\backslash B)<\epsilon$.
Then 
$$\lambda\big(f^{-k}(B)\big)\geq c_1 m_\omega\big(f^{-k}(B)\big)= c_1 m_\omega(B)\geq\frac{c_1}{c_2}\lambda(B),$$
so that $\lambda\big(f^{-k}(B)\big)$ cannot be smaller than $\epsilon$ if $c_1(1-\epsilon)\geq c_2\epsilon$. This contradicts (A2).

We now show that, provided $T_f$ is topologically transitive, then $f$ is an automorphism of $\mathbb D$. Indeed, the composition operator associated to an inner function has closed range
(see the proof of Theorem 2.2.7 in \cite{RM} where it is shown that a composition operator $T_f$ acting on $L^2$ has closed range if and only if $f$ is essentially
bounded away from zero). If
we moreover assume that $T_f$ has dense range, then we get that $T_f$ is invertible on $L^2(\mathbb T)$. By \cite[Theorem 2.2.14]{RM}, there exists a measurable map $g:\mathbb T\to\mathbb T$
such that, for almost all $\xi\in\mathbb T$, $g\circ f(\xi)=\xi$. Taking the Poisson transform of this equality yields $P[g]\circ f(z)=z$ for all $z\in\mathbb D$. 
In particular, $f$ has to be injective on $\mathbb D$. It is well-known that this implies that $f$ must be an automorphism of $\mathbb D$. For instance, by Frostman's Theorem,
there exists $\xi\in\mathbb T$ such that $f_\xi(z)=\frac{f(z)-\xi}{1-\bar\xi f(z)}$ is a Blaschke product, and $f_\xi$ remains one-to-one, being the composition of an automorphism
of $\mathbb D$ and an injective map. Thus, $f_\xi$ is a Blaschke product of degree 1, namely an automorphism, and $f$ itself is also an automorphism.

Conversely, assume that $f$ is an automorphism of $\mathbb D$ which has no fixed point in $\mathbb D$. For $\xi\in\mathbb T$, we denote by $I(\xi,\epsilon)$ the arc of $\mathbb T$ with Lebesgue measure $\epsilon$ 
and midpoint $\xi$. Observe also that $f^{-1}$ is also an automorphism of $\mathbb D$ without fixed point in $\mathbb D$. There exist $\omega_1,\omega_2$ in $\mathbb T$ (the respective Denjoy-Wolff
points of $f$ and $f^{-1}$) such that $f^k\to\omega_1$ uniformly on all $\mathbb T\backslash I(\omega_2,\epsilon)$ and 
$f^{-k}\to\omega_2$ uniformly on all $\mathbb T\backslash I(\omega_1,\epsilon)$, $\epsilon>0$
(we may have $\omega_1=\omega_2$; this happens if and only if $f$ is a parabolic automorphism of $\mathbb D$). Fix now $\epsilon>0$ and let 
$B=\mathbb T\backslash \big(I(\omega_1,\epsilon/2)\cup I(\omega_2,\epsilon/2)\big)$. Then $\lambda(B)\geq 1-\epsilon$. Moreover, $\lambda(f^{-k}(B))=\int_{\mathbb T}\rchi_B\circ f^kd\lambda$. Since $\rchi_B\circ f^k$
converges uniformly to 0 on $B$ and since $\lambda(\mathbb T\backslash B)\leq \epsilon$, we get that for $k$ large enough, $\lambda(f^{-k}(B))\leq\epsilon$. Similarly,
also for $k$ large enough, $\lambda(f^k(B))\leq \epsilon$. This ensures that $T_f$ is topologically mixing.

\end{proof}

\begin{remark}
 If $f$ is an inner function, $T_f$ induces also a bounded composition operator on the Hardy space $H^2(\mathbb D)$. It can be easily deduced from existing results in the literature that $T_f$
 is topologically transitive on $L^2(\mathbb T)$ iff $f$ is an automorphism of the disk without fixed point in $\mathbb D$. Indeed, that this condition is sufficient appears in \cite{JHS}.
 Conversely, if $T_f$ is topologically transitive, then $f$ has no fixed points in $\mathbb D$ (see \cite[Prop. 1.45]{BM}) and it is necessarily one-to-one (otherwise, it would not have
 dense range since the evaluation at a point $a\in\mathbb D$ is continuous in $H^2(\mathbb D)$). We conclude exactly as above.
\end{remark}

\subsection{Nonsingular odometer}\label{sodometer}

In this section, we provide an example of a bijective bimeasurable transformation \mbox{$f:X\to X$} defined on a probability space
$X=(X,\mathcal{B},\mu)$ whose composition operator $T_f$ acting on $L^p(X,\mathcal{B},\mu)$, $1\le p <\infty$,
 is topologically transitive but not topologically mixing (the example of Corollary \ref{cor32} was defined on a space with infinite measure). This class of transformations is known in ergodic theory as {\it odometer} or {\it adding machine} \big(see \cite{DaSi} for more information on this topic\big). Now we proceed with the definitions of $X$ and $f$. 
 
 The set $X$ is defined by
  \begin{equation}\label{defX} 
  X=\prod_{i= 1}^\infty A_i,\quad \textrm{where} \quad A_i=\begin{cases} 
\{0,1\} & \textrm{if $i$ is even} 
 \\
 \{0,1, \ldots, 2i-1\}  & \textrm{if $i$ is odd} \end{cases}.
 \end{equation}
We endow $A_i$ with the discrete topology and with the purely atomic probability measure \mbox{$\mu_i:2^{A_i}\to [0,1]$} which assigns to the atom $\{j\}$ of $A_i$ the value $\mu_i(j)$ defined by
\begin{equation}\label{even}
\mu_i(0)=\mu_i(1)=\dfrac12,\quad \textrm{if $i$ is even}
\end{equation}
and
\begin{equation}\label{muij}
\mu_i(j)=\begin{cases}
\dfrac{1-2^{-i}}{i} & \textrm{for $j\in \{0,\ldots,i-1\}$}\\[0.2in]
\phantom{aa}\dfrac{2^{-i}}{i} & \textrm{for $j\in \{i,\ldots,2i-1\}$}
\end{cases},\quad \textrm{if $i$ is odd.}
\end{equation}
We denote by $\mathcal{B}$ the product $\sigma$-algebra and by $\mu$ the product measure on $X=\prod_{i=1}^\infty A_i$. In this way, $X=(X,\mathcal{B},\mu)$ becomes a probability space. A point of $X$ is an infinite sequence $x=(x_i)_{i=1}^\infty$ with $x_i\in A_i$ for each $i\ge 1$.

The map $f:X\to X$ is defined as follows. If $x\in X$ is the point
$$ x^*=(\max A_i)_{i=1}^\infty=(1,1,5,1,9,\ldots)
$$ 
then $f$ assigns to it the infinite sequence $f(x^*)=(0,0,0,0,0,\ldots)$. Otherwise, $x\neq x^*$, thus there exists $i\ge 1$ such that
 $x_{i}<\max A_{i}$. In this case, let $\ell(x)=\min\, \{i\ge 1 ;\  x_i < \max A_i\}$ be the least positive integer $i$ with such property and set $f(x)$ to be the infinite sequence whose $i$-th term $f(x)_i$ is
 \begin{equation}\label{fxi}
 f(x)_i=
 \begin{cases} 
 0 & \textrm{if } i< \ell(x) \\
  x_i+1 & \textrm{if } i=\ell(x) \\
  x_{i} & \textrm{if } i>\ell(x).
 \end{cases}.
 \end{equation}
 
 The system $(X,f)$ is sometimes called {\it adding machine} or {\it odometer}. It is well-known that $f$ is a homeomorphism of $X$. The other properties of the system $(X,f)$ are provided in the claims below. \\

\noindent Claim A. $T_f$ is a bounded linear operator acting on $L^p(X,\mathcal{B},\mu), \,1\le p<\infty$.

\begin{proof} Let $x=(x_i)_{i=1}^\infty\in X$. For each $n\ge 1$, let
 $$[x_1,\ldots,x_n]=\{y=(y_i)_{i=1}^\infty\in X ;\  y_1=x_1,\ldots, y_n=x_n\}.$$
Denote by $\mu\circ f$ the Borel probability measure defined by $\mu\circ f(B)=\mu\left(f(B)\right)$ for every measurable set $B\subset X$. The measure $\mu\circ f$ is absolutely continuous with respect to $\mu$ and its  Radon-Nikodym derivative 
 with respect to $\mu$ at  $\mu$-almost every $x=(x_i)_{i\ge 1}\in X$ takes the value 
 (see \cite[Theorem 7, p. 118]{W} or \cite[Section 3.1]{DaSi})
 \begin{equation}\label{rn1}
 \frac{d\mu\circ f}{d\mu}(x) =\lim_{n\to\infty} \dfrac{\mu \left( f([x_1,\ldots,x_n])\right)}{\mu([x_1,\ldots,x_n])}=\lim_{n\to\infty}\prod_{i=1}^n \dfrac{\mu_i\left( f(x)_i\right)}{\mu_i(x_i)}.
 \end{equation}
 Let $k=\ell(x)$. Suppose first that $k=1$.  By $(\ref{defX})$, $A_1=\{0,1\}$, thus $x_1=0$. Moreover, 
 by $(\ref{fxi})$, we also have $f(x)_1=1$ and $f(x)_i=x_i$ for every $i\ge 2$. Therefore, by $(\ref{rn1})$,  we have that
\begin{equation}\label{muf1}
\frac{d\mu\circ f}{d\mu}(x)=\dfrac{\mu_1\left(f(x)_1\right)}{\mu_1(x_1)}=\dfrac{\mu_1(1)}{\mu_1(0)}=\dfrac{2^{-1}}{1-2^{-1}}=1.
\end{equation}
Assume now that $k\ge 2$, then $(\ref{defX})$, $(\ref{even})$, $(\ref{fxi})$ and $(\ref{rn1})$ lead to
\begin{equation}\label{dmf}
  \frac{d\mu\circ f}{d\mu}(x) =\frac{\mu_{k}(x_{k}+1)}{\mu_{k}(x_{k})}  \prod_{i=1}^{k -1} \frac{\mu_i(0)}{\mu_i(\max{A_i})}=\frac{\mu_{k}(x_{k}+1)}{\mu_{k}(x_{k})}   \prod_{\substack{i=1\\\textrm{$i$ odd}}}^{k -1} \dfrac{1-2^{-i}}{2^{-i}}.
\end{equation}
If $k\ge 2$ is even, then $\mu_k(x_k+1)=\frac12=\mu_k(x_k)$, which together with $(\ref{dmf})$ yields $\frac{d\mu\circ f}{d\mu}(x)\ge 1$. Otherwise, $k\ge 3$ is odd, then $(\ref{muij})$ and $(\ref{dmf})$ implies
\begin{equation*}\label{muf2}
  \frac{d\mu\circ f}{d\mu}(x) \ge  \frac{2^{-k}}{1-2^{-k}}\cdot\frac{1-2^{-(k-2)}}{2^{-(k-2)}}\ge \frac14\cdot \left( 1-\frac{3}{2^k-1}\right)\ge\frac17.
  \end{equation*}
Putting it all together, we obtain that $\frac{d\mu\circ f}{d\mu}(x) \ge \frac{1}{7}$ for $\mu$-almost every $x \in X$. This, in turn, implies that $\mu(f(B)) \ge \frac{1}{7} \mu(B)$ for every measurable
set $B$.
Hence, $T_f$ is a well-defined bounded linear operator (see \cite[Theorem 2.1.1]{RM}).
\end{proof}

\noindent Claim B. $T_f$ is topologically transitive.

\begin{proof}
We will verify Condition (C1) in Corollary \ref{cthm1}. For each $n$ odd, let $A_n=B_n\cup C_n$, where
\begin{equation}\label{BC}
B_n=\{0,\ldots,n-1\}\quad\textrm{and}\quad C_n=\{n,\ldots,2n-1\}.
\end{equation}
Let $\varepsilon >0$ and $n$ be an odd integer  such that $2^{-n} < \varepsilon$. Let 
\[ B =\left\{x=(x_i)_{i=1}^\infty\in X ;\  x_n \in B_n \}\right..
\]
By $(\ref{muij})$, as $n$ is odd, we have that $\mu(B)=\mu_n(B_n)= 1-2^{-n} > 1 - \varepsilon.$ Therefore,
$\mu(X{\setminus} B)<\epsilon$.
Let $k = n \prod_{i=1}^{n-1} |A_i|$, where $|A_i |$ denotes the cardinality of $A_i$. Then, for any $x \in B$, we have that
$$f^k(x)_i = \begin{cases} x_i & \textrm{if $i\neq n$}\\
x_n+n & \textrm{if $i= n$.}\\
\end{cases}
$$
Hence, 
$f^k(B) \subset \left\{x \in X ;\  x_n\in C_n\right\}$, verifying that $B\cap f^k(B) =\emptyset$. 
\end{proof}

\noindent Claim C. $T_f$ is not topologically mixing.

\begin{proof} We will prove that $f$ does not satisfy Condition (D2) in Corollary \ref{cthm2}. This together with the fact that $\mu$ is finite prevents $T_f$ from being mixing. 
	
By way of contradiction, suppose that $f$ satisfies Condition (D2) in Corollary \ref{cthm2}. In particular, for $\epsilon=0.1$, there exist an odd integer
$n\ge 1$ and measurable sets $\{B_k\}_{k=n}^\infty$ such that 
\begin{equation}\label{c1again}
\mu(X{\setminus}B_k)<0.1\quad\textrm{and} \quad B_k\cap f^k(B_k)=\emptyset\quad \textrm{for every}\quad k\ge n. 
\end{equation}
For $i \in A_n$ and $j \in A_{n+1}$, define 
\[ E_i = \{x \in X ;\  x_n =i\} \textrm{ and }   E_{i,j} = \{x \in X ;\  x_n =i  \textrm{ and }  x_{n+1} =j\}.\]
Let $k = \prod_{i=1}^n | A_i |$, where $| A_i | $ denotes the cardinality of $A_i$, then $k\ge n$. By the first inequality in $(\ref{c1again})$, we have that $\mu(B_k) > 0.9$. As $\cup_{i=1}^{2n-1} E_i = X$, there is $i \in A_n$ such that
$\mu (B_k \cap E_i) > 0.9 \mu(E_i)$. As $n+1$ is even, we have by  $(\ref{defX})$ that $A_{n+1}=\{0,1\}$, thus $E_i = E_{i,0} \cup E_{i,1}$ and $\mu(E_{i,0}) = \mu(E_{i,1}) = 0.5\mu(E_i)$, which lead to
\[ \mu(B_k \cap E_{i,0}) > 0.4 \mu(E_i)\quad\textrm{and} \quad  \mu(B_k \cap E_{i,1}) >  0.4\mu(E_i).\]

As $k = \prod_{i=1}^n | A_i | 
$, we have that  if $x \in E_{i,0}$, then $f^k(x)_{j} = x_j$ for all $j \neq (n+1)$ and $f^k(x)_{n+1} = 1$. Hence, $f^k(B_k\cap E_{i,0}) \subset E_{i,1}$. Moreover, as $\mu_{n+1} (0) = \mu_{n+1}(1) =0.5$, we have that $\mu\left(f^k (B_k \cap E_{i,0})\right) = \mu(B_k \cap E_{i,0})$$ > 0.4 \mu(E_i)$.
Hence, we have that $f^k (B_k \cap E_{i,0})$ and $B_k \cap E_{i,1}$ are both subsets of $E_{i,1}$, each having $\mu$-measure at least $0.4 \mu(E_i)$. However, as $E_{i,1}$ has $\mu$-measure $0.5\mu(E_i)$, we have that 
$  (B_k  \cap E_{i,1})\cap f^k (B_k \cap E_{i,0})  \neq \emptyset$. This, in turn, implies that $B_k\cap f^k(B_k) \neq \emptyset$, which contradicts $(\ref{c1again})$.
\end{proof}

\subsection{A topologically transitive composition operator on a non $\sigma$-compact Borel space}\label{s333}
In this subsection, we give an example of a topologically transitive composition operator on a Borel space which is not $\sigma$-compact. 
We start with a Borel probability space $(Y,\nu)$ which is not $\sigma$-compact (for instance, $Y$ could be an infinite dimensional Banach space endowed
with a Gaussian measure). Let $X=[0,1]\times Y$ endowed with the product topology and let $\mu=dx\otimes \nu$. The map $f$ is defined on $[0,1]\times X$ by
$(a,y)\mapsto (a/2,y)$. Then since $\frac 12\mu(B)\leq \mu\big( f(B)\big)\leq 2\mu(B)$, $T_f$ defines a bounded composition operator on $L^1(\mu)$. It satisfies clearly
condition (C2) by choosing $B=[\epsilon,1]\times Y$ and $k\geq 1$ with $2^{-k}<\epsilon$.

\subsection{A topologically mixing composition operator induced by a non-bimeasurable transformation}

We now give an example of a topologically mixing composition operator induced by a non-bimeasurable transformation. 
Let $\mathbb Z$ be endowed with the $\sigma$-algebra $\mathcal B$ generated by the sets $\{k\}$, $k<0$, and the sets $\{2k,2k+1\}$,
$k\geq 0$. Let $f:\mathbb Z\to\mathbb Z$ be defined by 
$$f(n)=\left\{
\begin{array}{ll}
 0&\textrm{if }n=-2\\
 2&\textrm{if }n=-1\\
 n+4&\textrm{if }n\geq 0\\
 n+1&\textrm{if }n\leq -3.
\end{array}\right.$$
Then $f$ is measurable yet not bimeasurable: the image of the measurable set $\{-2\}$ by $f$ is $\{0\}$ which is not measurable.

We endow $\mathcal B$ with the following finite measure $\mu$: for $k<0$, $\mu(\{k\})=2^k$ and for $k\geq 0$, $\mu(\{2k,2k+1\})=2^{-k}$.

\begin{proposition}
 $T_f$ is a topologically mixing transformation of $L^2(\mathbb Z,\mathcal B,\mu)$.
\end{proposition}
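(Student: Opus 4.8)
The plan is to verify condition (B2) of Theorem \ref{thm2}. Since $\mu$ is finite, every $A\in\mathcal B$ has finite measure, and because all the requirements in (B2) except $\mu(A\setminus B_k)<\epsilon$ are independent of $A$ (and $\mu(A\setminus B_k)\le\mu(X\setminus B_k)$, replacing $B_k$ by $B_k\cap A$ if one insists on $B_k\subseteq A$), it suffices to treat the single case $A=X$. Throughout I write the atoms of $\mathcal B$ as $\alpha_j=\{2j,2j+1\}$ for $j\ge 0$ and $\{-m\}$ for $m\ge 1$, so that $\mu(\alpha_j)=2^{-j}$ and $\mu(\{-m\})=2^{-m}$.

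First I would settle $f^{-1}(\mathcal B)=\mathcal B$, needed for dense range, by a direct computation on atoms: one checks $f^{-1}(\alpha_0)=\{-2\}$, $f^{-1}(\alpha_1)=\{-1\}$, $f^{-1}(\alpha_j)=\alpha_{j-2}$ for $j\ge 2$, $f^{-1}(\{-1\})=\emptyset$, and $f^{-1}(\{-m\})=\{-m-1\}$ for $m\ge 2$. Thus every preimage of an atom is again an atom, and together these preimages exhaust all generators of $\mathcal B$, whence $f^{-1}(\mathcal B)=\mathcal B$. The same table shows $\mu(f^{-1}(S))\le 4\mu(S)$ for every $S$, so $T_f$ is bounded on $L^2$. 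Next I would record the forward dynamics: every orbit is eventually driven into the nonnegative half-line and then pushed to $+\infty$ by $n\mapsto n+4$. Concretely $f^k(n)=n+4k$ for $n\ge 0$, $f^k(-1)=4k-2$, and for $m\ge 2$ one has $f^k(-m)=4(k-m+1)$ once $k\ge m-1$ while $f^k(-m)=-(m-k)$ for $k\le m-2$. In particular $f$ maps $\alpha_j$ onto $\alpha_{j+2}$ and $f^{-1}(\alpha_j)=\alpha_{j-2}$ amplifies measure by a factor $4$ at each step; this dissipativity is the crux of the argument.

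Now fix $\epsilon>0$. I would choose integers $M\ge 2$ and $L\ge 1$ (depending only on $\epsilon$) with $2^{-M}<\epsilon/2$ and $2^{-L/2}$ small, and for each large $k$ set
\[
B_k=\Big(\bigcup_{j=0}^{2k-L-1}\alpha_j\Big)\cup\{-1,\dots,-M\},\qquad
C_k=\bigcup_{\ell\ge 2k-2M+2}\alpha_\ell .
\]
The removed set $X\setminus B_k$ consists of the high–index atoms $\alpha_j$ ($j\ge 2k-L$), of measure $2^{L+1}4^{-k}$, together with the far negatives $\{-m:m>M\}$, of measure $2^{-M}$; hence $\mu(X\setminus B_k)<\epsilon$ for $k$ large. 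For the forward image: each kept low–index atom is sent to $\alpha_{j+2k}$ with $j+2k\ge 2k$, and each kept negative $-m$ (with $m\le M$) is sent into $\alpha_{2(k-m+1)}$, whose index is $\ge 2k-2M+2$; therefore $f^k(B_k)\subseteq C_k$ and $\mu(C_k)=2^{2M-1}4^{-k}<\epsilon$ for $k$ large. Note $C_k$ is a union of full atoms, hence measurable, which is what one needs since $f$ is not bimeasurable.

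The only genuinely delicate requirement is $\mu(f^{-k}(B_k))<\epsilon$, which at first sight conflicts with $B_k$ carrying almost all of the mass of $X$; I expect this to be the main obstacle. The resolution is precisely the dissipativity noted above: because $f^{-1}(\alpha_j)=\alpha_{j-2}$, the measure $\mu(f^{-k}(\cdot))$ develops density $4^k$ on the atoms $\alpha_\ell$ with $\ell\ge 2k$, so essentially all of its mass sits on high–index atoms, i.e.\ inside $X\setminus B_k$. I would make this quantitative by tracing which points land in $B_k$: a point lies in $f^{-k}(B_k)$ only if its $f^k$–image falls in the low–index window $\{0,\dots,4k-2L-1\}$ or in $\{-1,\dots,-M\}$. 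By the orbit formulas the first occurs only for negatives $-m$ with $m\gtrsim L/2$ (total mass $O(2^{-L/2})$) and the second only for negatives $-m$ with $k+2\le m\le k+M$ (total mass $\le 2^{-(k+1)}$). Hence $\mu(f^{-k}(B_k))\le O(2^{-L/2})+2^{-(k+1)}<\epsilon$ for $L$ large and then $k$ large. This verifies (B2) with a suitable $k_0$, and Theorem \ref{thm2} yields that $T_f$ is topologically mixing.
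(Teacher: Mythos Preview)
Your proof is correct: the verification of $f^{-1}(\mathcal B)=\mathcal B$, the orbit formulas, and the three estimates $\mu(X\setminus B_k)<\epsilon$, $f^k(B_k)\subset C_k$ with $\mu(C_k)<\epsilon$, and $\mu(f^{-k}(B_k))<\epsilon$ all check out. The underlying idea---orbits escape to $+\infty$ forward and to $-\infty$ backward---is exactly the paper's.

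The difference is entirely in execution. The paper fixes a single $B$ independent of $k$: choose $n$ with $\mu(C_n)<\epsilon/2$ and $\mu(D_n)<\epsilon/2$ where $C_n=\{2n,2n+1,\dots\}$ and $D_n=\{-2n,-2n-1,\dots\}$, set $B=\mathbb Z\setminus(C_n\cup D_n)$, and simply observe that $f^k(B)\subset C_n$ and $f^{-k}(B)\subset D_n$ for all large $k$. That one-line observation replaces your entire ``delicate'' backward-measure analysis. The complication you flag is self-inflicted: by letting $B_k$ grow with $k$ (so that $B_k$ carries nearly all of $\mu$), you force yourself to show that $\mu\circ f^{-k}$ concentrates on the thin complement, which requires the orbit-tracing. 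But (B2) only asks for $\mu(A\setminus B_k)<\epsilon$, not $\mu(A\setminus B_k)\to 0$, so there is no need for $B_k$ to grow, and a fixed finite window does the job with no computation. Your version is right, just longer than necessary.
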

\begin{proof}
We first observe that $f^{-1}(\mathcal B)=\mathcal B$. This follows from
$$\left\{
\begin{array}{ll}
 f^{-1}(\{k+1\})=\{k\}&\textrm{if }k\leq -3\\
 f^{-1}(\{0,1\})=\{-2\}\\
 f^{-1}(\{2,3\})=\{-1\}\\
 f^{-1}(\{2k+4,2k+5\})=\{2k,2k+1\}&\textrm{if }k\geq 0.
\end{array}\right.$$

Moreover, let $\epsilon>0$. There exists $n\geq 1$ large enough such that $\mu(C_n)<\epsilon/2$ and $\mu(D_n)<\epsilon/2$
where
$$C_n=\{2n,2n+1,\dots\},\ D_n=\{-2n,-2n-1,\dots\}.$$
We set $B=\mathbb Z\backslash (C_n\cup D_n)$. To conclude, it suffices to observe that for $k$ large enough, $f^k(B)\subset C_n$ and $f^{-k}(B)\subset D_n$.
\end{proof}

\section{Bi-lipschitz $\mu$-transformations}\label{bili}
	
	In this section, we provide a fairly comprehensive study of the composition operator when $f$ is a bi-Lipschitz $\mu$-contraction.
	This study reveals how necessary and how embracing  the hypotheses of our main theorems are.
		
	Throughout this section, we assume that $X=(X,\mathcal{B},\mu)$ is a $\sigma$-finite measure space, $\mathcal{X}\subset L^0(\mu)$ is any admissible Banach space of functions defined on $X$, and $f:X\to X$ is a bimeasurable map such that the composition operator $T_f:\varphi\mapsto\varphi\circ f$ is bounded on $\mathcal X$. 
	
	We say that $f$ is a {\it bi-Lipschitz} $\mu$-{\it transformation} if there exist $0<c_1\le c_2$ such that for every measurable set $B$,
	\begin{equation}\label{blc} 
	c_1 \mu(B)\le \mu\left( f(B)\right)\le c_2\mu (B).
	\end{equation}
Note that if $f$ is a bi-Lipschitz $\mu$-transformation and $\mathcal X=L^p(X,\mathcal{B},\mu)$, then $f$ is nonsingular and the composition operator $T_f$ is bounded on $\mathcal{X}$. If  $f$ satisfies $(\ref{blc})$ with $c_2< 1$, then  $f$ is called  {\it bi-Lipschitz $\mu$-contraction}. 

The next proposition shows that within the category of one-to-one bi-Lipschitz  $\mu$-contractions, topological mixing is equivalent to topological transitivity. 

\begin{proposition}\label{finmes}
Let $(X,\mathcal B,\mu)$ be a $\sigma$-finite measure space and $f:X\to X$ be a one-to-one bi-Lipschitz $\mu$-contraction. Then one of the following happens:
	\begin{itemize}
		\item [$(E1)$] $\mu\left(\cap_{k\ge 1}f^k(X)\right)=0$ and $T_f$ is topologically mixing;
		\item [$(E2)$] $\mu\left(\cap_{k\ge 1}f^k(X)\right)\neq 0$ and $T_f$ is not topologically transitive.
	\end{itemize}
\end{proposition}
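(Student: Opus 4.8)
The plan is to decide the dichotomy according to the value of $\mu(Y_\infty)$, where I write $Y_k := f^k(X)$ and $Y_\infty := \bigcap_{k\ge 1}Y_k$, and to translate each alternative into the criteria of Theorems \ref{thm1} and \ref{thm2}. I would begin with a few structural observations. Since $f$ is injective, images commute with intersections, so the sequence $(Y_k)$ is decreasing and $Y_\infty$ is measurable; since $f$ is one-to-one and bimeasurable, $f^{-1}(\mathcal B)=\mathcal B$, so in conditions (A2) and (B2) one may always take $C=f^k(B)\in\mathcal B$ and dispense with the auxiliary set; and iterating the contraction gives $\mu(f^k(B))\le c_2^k\mu(B)$ for every measurable $B$. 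The two cases $\mu(Y_\infty)=0$ and $\mu(Y_\infty)>0$ are exhaustive and mutually exclusive, so proving the two implications below establishes the proposition.

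For (E1), assuming $\mu(Y_\infty)=0$, I would verify the mixing criterion (B2). Fix $A$ of finite measure and $\epsilon>0$, and make the decisive choice $B_k := A\setminus Y_k$. This choice kills the preimage term outright: since $B_k\cap f^k(X)=\emptyset$, no point of $B_k$ has an $f^k$-preimage, so $f^{-k}(B_k)=\emptyset$. The forward term is controlled by the contraction, $\mu(f^k(B_k))\le c_2^k\mu(A)\to 0$. Finally $\mu(A\setminus B_k)=\mu(A\cap Y_k)$, and since $A\cap Y_k\downarrow A\cap Y_\infty$ with $\mu(A)<\infty$, continuity from above gives $\mu(A\cap Y_k)\to\mu(A\cap Y_\infty)=0$. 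Hence all the requirements of (B2) hold for every $k$ beyond some $k_0$, so $T_f$ is topologically mixing.

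For (E2), assuming $\mu(Y_\infty)>0$, I would show that (A2) fails, whence $T_f$ is not topologically transitive. Using $\sigma$-finiteness, pick $A\subset Y_\infty$ with $0<\mu(A)<\infty$ and set $\epsilon=\mu(A)/3$. For any $B\subset A$ with $\mu(A\setminus B)<\epsilon$ and any $k\ge 1$, the inclusion $B\subset Y_\infty\subset Y_k=f^k(X)$ forces $f^k(f^{-k}(B))=B$; combining this with the contraction, $\mu(f^{-k}(B))\ge c_2^{-k}\mu(B)\ge\mu(B)=\mu(A)-\mu(A\setminus B)>\mu(A)-\epsilon>\epsilon$. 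Thus the preimage condition $\mu(f^{-k}(B))<\epsilon$ of (A2) can never be met for this particular $A$, so (A2) fails.

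The conceptual heart of the argument—and the step I expect to require the most care—is the choice $B_k=A\setminus f^k(X)$ in (E1), which is engineered precisely so that the preimage set becomes empty while the image is squeezed by the contraction; the remaining subtlety is the passage from the finite-measure setting of Corollary \ref{cthm2} to the genuinely $\sigma$-finite case, which is handled by intersecting with the finite-measure set $A$ and invoking continuity from above. I would also record the invariance $f(Y_\infty)=Y_\infty$ (again a consequence of injectivity), as it explains conceptually why $Y_\infty$ is a persistent obstruction to mixing, even though it is the quantitative estimate above that the proof actually uses.
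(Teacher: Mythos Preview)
Your proof is correct and follows essentially the same approach as the paper's. In fact your choice $B_k = A \setminus f^k(X)$ coincides with the paper's $B_k = \big(W\cup f(W)\cup\cdots\cup f^{k-1}(W)\big)\cap A$ where $W=X\setminus f(X)$, since by injectivity $W\cup\cdots\cup f^{k-1}(W)=X\setminus f^k(X)$; your appeal to continuity from above simply bypasses the paper's intermediate wandering-set decomposition, and your treatment of (E2) is identical to the paper's up to the choice of constant.
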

\begin{proof}
	Let $\Lambda=\cap_{k\ge 1} f^k(X)$, then $f(\Lambda)=\Lambda$. Suppose  first that  $\mu(\Lambda)=0$. 
	
	Let $W=X{\setminus} f\left(X\right)$, then $W\cap f^k (W)=\emptyset$ and $f^{-k}(W)=\emptyset$ for every $k\ge 1$. This together with the injectivity of $f$ implies that the sets $\{f^k(W), k\in\mathbb{Z}\}$
	are pairwise disjoint. By the injectivity of $f$, for every $k\ge 0$, $f^{k}(W)=f^{k}\left( X\right)\setminus f^{k+1}(X)$, thus
	\begin{equation}\label{eqw1}
	W\cup f(W)\cup \cdots \cup f^{k-1}(W)=X\big\backslash f^{k}\left(X\right)\quad\textrm{for every}\quad k\ge 1.
	\end{equation}
	In this way,  we have that
	\begin{equation}\label{eqw2}
	\bigcup_{k\ge 0} f^k(W)=X\Big\backslash \bigcap_{k\ge 1} f^k(X)=X\setminus \Lambda.
	\end{equation}
	Since $\mu(\Lambda)=0$, we have that $\{f^k(W):k\ge 0\}$ is a partition of $X$ mod $\mu$.
	We shall verify condition (B2) of Theorem \ref{thm2}. Let $A\in\mathcal B$ with finite measure and $\epsilon>0$. We know that $\sum_{k\geq 0}\mu\big(A\cap f^k(W)\big)=\mu(A)$, thus there exists $k_0\ge 0$ such that
	\begin{equation}\label{sum<e}\sum_{k\ge k_0}\mu\big(A\cap f^{k}(W)\big)<\epsilon.
	\end{equation} 
	Set $B_k=\big(W\cup f(W)\cup\cdots\cup f^{k-1}(W)\big)\cap A$. By $(\ref{sum<e})$,
	 $\mu(A\backslash B_k)<\epsilon$ for every $k\ge k_0$. 
	Moreover, $f^{-k}(B_k)=\emptyset$ whereas, by $(\ref{blc})$, $\mu\big( f^k(B_k)\big)\le c_2^k \mu\big(B_k\big)<\epsilon$ for every $k$ big enough, showing that condition (B2) is indeed satisfied.
	
	Now suppose that $\mu(\Lambda)\neq 0$. Let $A\subset\Lambda$ be a set of positive measure. Let $B\subset A$ be such that $\mu(B)\geq\mu(A)/2$. Then,
	since $B\subset A\subset \bigcap_{k\geq 1}f^k(X)$, the injectivity of $f$ ensures that $f^k(f^{-k}(B))=B$. Since $f$ is a  $\mu$-contraction, this 
	implies $\mu\big(f^{-k}(B)\big)\geq \mu(B)\geq \mu(A)/2$ for all $k\geq 1$. Hence, (A2) is not satisfied and $T_f$ is not topologically transitive.
\end{proof} 

\begin{remark}
 It is sufficient to assume that $f$ is a weak $\mu$-contraction (namely to assume that \eqref{blc} with $c_2=1$) to prove that the condition  $\mu\left(\cap_{k\ge 1}f^k(X)\right)\neq 0$ implies 
 that $T_f$ is not topologically transitive.
\end{remark}

	\begin{example}\label{ex33} Let $\mu$ be the Lebesgue measure, then the following assertions hold true:
	\begin{itemize}
	\item [$(i)$] The one-to-one bi-Lipschitz $\mu$-contraction $x\in [0,1]\mapsto \frac{x}{2}$ satisfies (E1);
	\item [$(ii)$] The bijective bi-Lipschitz $\mu$-contraction $x\in\mathbb{R}\mapsto \frac{x}{2}$ satisfies (E2);
	\item [$(iii)$] The one-to-one bi-Lipschitz $\mu$-contraction $(x,y)\in [0,1]\times \mathbb{R}\to \left(\frac{x}{4},2y\right)$ satisfies (E1).
 	\end{itemize}
	\end{example}
		
		
		The following corollary follows straightforwardly from Proposition \ref{finmes}.
		
	\begin{corollary}\label{cor}
Let $(X,\mathcal B,\mu)$ be a $\sigma$-finite measure space and $f:X\to X$ be a one-to-one bi-Lipschitz $\mu$-contraction. Then one of the following happens:	\begin{itemize}
	\item [$(a)$] If $\mu$ is finite then $T_f$ is topologically mixing;
         \item [$(b)$] If $f$ is bijective then $T_f$ is not topologically transitive.
	\end{itemize}
	\end{corollary}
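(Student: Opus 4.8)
The plan is to deduce Corollary \ref{cor} as an immediate consequence of Proposition \ref{finmes}, which dichotomizes the behaviour of a one-to-one bi-Lipschitz $\mu$-contraction according to whether $\mu(\Lambda)=0$ or $\mu(\Lambda)\neq 0$, where $\Lambda=\cap_{k\ge 1}f^k(X)$. Both parts follow by ruling out the wrong case of the dichotomy.

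For part $(a)$, assume $\mu$ is finite. The key observation is that the sets $f^k(X)$ are nested decreasing, since $f^{k+1}(X)=f^k(f(X))\subset f^k(X)$. Because $f$ is a $\mu$-contraction with constant $c_2<1$, iterating \eqref{blc} gives $\mu\big(f^k(X)\big)\le c_2^k\,\mu(X)$. As $\mu(X)<\infty$ and $c_2^k\to 0$, continuity from above yields $\mu(\Lambda)=\lim_{k\to\infty}\mu\big(f^k(X)\big)=0$. Hence case $(E1)$ of Proposition \ref{finmes} applies, so $T_f$ is topologically mixing.

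For part $(b)$, assume $f$ is bijective. Then $f(X)=X$, so $f^k(X)=X$ for every $k\ge 1$ and therefore $\Lambda=\cap_{k\ge 1}f^k(X)=X$. Since $\mu(X)>0$ by our standing assumption, we have $\mu(\Lambda)>0$, so case $(E2)$ of Proposition \ref{finmes} applies and $T_f$ is not topologically transitive.

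The main (and only real) subtlety is the continuity-from-above step in part $(a)$: one must check that the nested intersection $\Lambda$ has measure equal to the limit of the $\mu\big(f^k(X)\big)$, which requires that at least one of these sets have finite measure. This is guaranteed here by the finiteness of $\mu$. Everything else is a direct invocation of the proposition, so I expect no genuine obstacle beyond correctly identifying which alternative of the dichotomy is forced in each case.
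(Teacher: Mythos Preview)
Your proof is correct and follows essentially the same approach as the paper: both parts are deduced from Proposition~\ref{finmes} by checking which side of the dichotomy is forced. The only cosmetic difference is in part~(b), where the paper uses the inequality $\mu(X)=\mu(f(X))\le c_2\,\mu(X)$ to conclude $\mu(X)=\infty$ before noting $\mu(\Lambda)=\mu(X)\neq 0$, whereas you argue directly that bijectivity gives $\Lambda=X$ and invoke the standing assumption $\mu(X)>0$; your route is in fact slightly more direct.
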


	\begin{proof} As $f$ is a $\mu$-contraction, for some $0\le c_2<1$ and every $k\ge 1$, we have that
	$\mu\left(f^k(X)\right)\le c_2^k \mu(X).$
	Hence, if $\mu$ is finite then $\mu\left( \cap_{k\ge 1}f^k(X)\right)=0$ and $T_f$ is topologically mixing by Condition (E1) in Proposition \ref{finmes}.
	On the other hand, if $f$ is bijective, then $\mu(X)=\mu\left(f(X)\right)\le c_2 \mu(X)$, implying $\mu\left( \cap_{k\ge 1}f^k(X)\right)=\mu(X)=\infty$. In this case, by Condition (E2)
	in Proposition \ref{finmes} we have that $T_f$ is not topologically transitive.
			\end{proof}
	
	In finite measure spaces, the hypothesis of $\mu$-contraction is not necessary for topological mixing. Let $(X,\mathcal{B},\mu)$ be a finite measure space and $f:X\to X$ be a bi-Lipschitz $\mu$-transformation. We say that a measurable set $W$ is a {\it wandering set} for $f$ if its iterates $\{f^k(W):k\in\mathbb{Z}\}$ are pairwise disjoint. We say that $W$ is {\it exhaustive} if
	$\mu\left( \cup_{k\in\mathbb{Z}} f^k(W)\right )=\mu(X)$. We say that $f$ is {\it $\mu$-dissipative} if it admits an exhaustive wandering set. The following result generalizes the item (a) of Corollary \ref{cor}. 
	
	\begin{proposition}\label{wanderinginterval} Let $(X,\mathcal{B},\mu)$ be a finite measure space and $f:X\to X$ be a one-to-one $\mu$-dissipative transformation, then $T_f$ is topologically mixing.
	\end{proposition}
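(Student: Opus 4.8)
The plan is to verify condition (D2) of Corollary~\ref{cthm2}. This is legitimate: here $\mu$ is finite and, by the standing hypotheses of this section, $f$ is one-to-one, bimeasurable and induces a bounded $T_f$, so Corollary~\ref{cthm2} applies and it suffices to check (D2). The advantage of (D2) over condition (B2) of Theorem~\ref{thm2} is that it only asks for sets $B_k$ with $\mu(X\setminus B_k)<\epsilon$ and $B_k\cap f^k(B_k)=\emptyset$; in fact I expect to produce a \emph{single} set $B$ that works simultaneously for all large $k$, so that one may take $B_k=B$ constant.

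First I would fix an exhaustive wandering set $W$, which exists by $\mu$-dissipativity, and abbreviate $W_j=f^j(W)$ for $j\in\mathbb{Z}$ (for $j<0$ this is the preimage $(f^{-1})^{|j|}(W)$). Since the $W_j$ are pairwise disjoint and $\mu\big(\bigcup_j W_j\big)=\mu(X)$, countable additivity gives $\sum_{j\in\mathbb{Z}}\mu(W_j)=\mu(X)<\infty$. Hence, given $\epsilon>0$, I can choose $N$ so large that $\sum_{|j|>N}\mu(W_j)<\epsilon$ and set $B=\bigcup_{|j|\le N}W_j$; exhaustiveness then yields $\mu(X\setminus B)=\sum_{|j|>N}\mu(W_j)<\epsilon$.

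The key computation is to control $f^k(B)$. For $j\ge 0$ one has $f^k(W_j)=f^{k+j}(W)=W_{k+j}$ exactly, while for $j<0$ injectivity gives only the inclusion $f^k(W_j)\subseteq W_{k+j}$ (applying $f^{|j|}$ to the preimage $(f^{-1})^{|j|}(W)$ recovers $W$ only after intersecting with $f^{|j|}(X)$, since $f$ need not be onto). In either case, for $k\ge N$ and $|j|\le N$, one gets $f^k(B)\subseteq\bigcup_{k-N\le m\le k+N}W_m$. As $B=\bigcup_{-N\le m\le N}W_m$ and the $W_m$ are pairwise disjoint, whenever $k\ge 2N+1$ the index ranges $[-N,N]$ and $[k-N,k+N]$ are disjoint, whence $B\cap f^k(B)=\emptyset$. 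Setting $k_0=2N+1$ and $B_k=B$ for every $k\ge k_0$ verifies (D2), and topological mixing follows from Corollary~\ref{cthm2}.

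The only delicate point, and the step I expect to require care, is the treatment of the negative iterates $W_j$, $j<0$: because $f$ is not assumed surjective, $f^k$ does not act on these sets as a clean index shift but only through the containment $f^k(W_j)\subseteq W_{k+j}$. Fortunately this inclusion is all the argument uses, so the non-surjectivity causes no genuine obstruction; everything else is bookkeeping with the finite total measure and the convergent series $\sum_j\mu(W_j)$.
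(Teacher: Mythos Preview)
Your proof is correct and follows essentially the same route as the paper's: verify (D2) of Corollary~\ref{cthm2} by taking the symmetric block $B=\bigcup_{|j|\le N}f^j(W)$ and using the index shift to get $B\cap f^k(B)=\emptyset$ once $k\ge 2N+1$. If anything, you are more careful than the paper at the one point you flagged: for $j<0$ you correctly note only the inclusion $f^k(W_j)\subseteq W_{k+j}$, whereas the paper writes $f^k(B)=\bigcup_{\ell=k_0+1+p}^{3k_0+1+p}f^{\ell}(W)$ as an equality; since only the inclusion is needed for disjointness from $B$, both arguments go through.
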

	\begin{proof}  The proof consists in verifying Condition (D2) in Corollary \ref{cthm2}. Since $f$ is $\mu$-dissipative, it admits an exhaustive wandering set $W$. Hence, given $\epsilon>0$, there exists $k_0\ge 1$ and
		$B=\cup_{k=-k_0}^{k_0} f^k(W)$ such that $\mu(X{\setminus} B)<\epsilon$. Moreover, for every $k\ge 2k_0+1$, say $k=2k_0+1+p$, where $p\ge 0$, we have that $f^k(B)=\cup_{\ell=k_0+1+p}^{3k_0+1+p} f^{\ell}(W)$, thus $f^k(B)\cap B=\emptyset$.
	\end{proof}
		
		The notion of $\mu$-dissipative transformation is broad in the sense that it includes all the $\mu$-transformations satisfying Condition (E1) in Proposition \ref{finmes}.

		\begin{proposition} Let $(X,\mathcal{B},\mu)$ be a finite measure space and $f:X\to X$ be a one-to-one bi-Lipschitz $\mu$-transformation satisfying $\mu\left(\cap_{k\ge 1} f^k(X)\right)=0$, then $f$ is $\mu$-dissipative.
		\end{proposition}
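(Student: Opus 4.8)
The plan is to exhibit an explicit exhaustive wandering set, reusing verbatim the construction already carried out in the case $\mu(\Lambda)=0$ of Proposition \ref{finmes}. Set
$$W=X\setminus f(X),\qquad \Lambda=\bigcap_{k\ge1}f^k(X).$$
Since $f$ is bimeasurable, $f(X)\in\mathcal B$ and hence $W$ is measurable, as are all its iterates $f^k(W)$.

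First I would check that $W$ is a wandering set. Because $f$ is one-to-one, each $f^k$ is injective, so $f^k(W)=f^k\big(X\setminus f(X)\big)=f^k(X)\setminus f^{k+1}(X)$ for every $k\ge0$. As $f(X)\subseteq X$ produces the decreasing chain $\cdots\subseteq f^{k+1}(X)\subseteq f^{k}(X)\subseteq\cdots\subseteq X$, these "annuli" $f^k(X)\setminus f^{k+1}(X)$ are pairwise disjoint for $k\ge0$. For the negative iterates, observe that every point of $W$ lies outside the image $f(X)$, hence has no $f$-preimage; therefore $f^{-k}(W)=\emptyset$ for every $k\ge1$. Consequently the whole bilateral family $\{f^k(W):k\in\mathbb{Z}\}$ is pairwise disjoint, which is exactly the wandering condition.

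Next I would verify exhaustiveness. Telescoping the disjoint annuli gives $\bigcup_{k=0}^{n-1}f^k(W)=X\setminus f^n(X)$ for each $n\ge1$, so letting $n\to\infty$ yields $\bigcup_{k\ge0}f^k(W)=X\setminus\Lambda$. Since the negative iterates vanish, $\bigcup_{k\in\mathbb{Z}}f^k(W)=X\setminus\Lambda$ as well, and the hypothesis $\mu(\Lambda)=0$ gives $\mu\big(\bigcup_{k\in\mathbb{Z}}f^k(W)\big)=\mu(X\setminus\Lambda)=\mu(X)$ (here finiteness of $\mu$ guarantees $\mu(X\setminus\Lambda)=\mu(X)-\mu(\Lambda)$). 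Thus $W$ is an exhaustive wandering set and $f$ is $\mu$-dissipative.

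There is no genuine obstacle here: the argument is precisely the measure-theoretic bookkeeping appearing in the subcase $\mu(\Lambda)=0$ of Proposition \ref{finmes}, with the contraction assumption of that proposition (which was only invoked there to force $\mu(\Lambda)=0$) now supplied directly as a hypothesis. The only points requiring minor care are the identity $f^k(W)=f^k(X)\setminus f^{k+1}(X)$, which relies on injectivity, and the observation that all negative iterates are empty, so that the bilateral wandering condition reduces to disjointness of the forward annuli; neither the lower bound $c_1$ nor the upper bound $c_2$ of the bi-Lipschitz inequality \eqref{blc} plays any further role beyond ensuring that $f$ is nonsingular and measure-compatible.
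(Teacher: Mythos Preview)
Your proof is correct and follows essentially the same route as the paper's own proof: both take $W=X\setminus f(X)$, observe that the forward iterates $f^k(W)=f^k(X)\setminus f^{k+1}(X)$ are pairwise disjoint while the negative iterates are empty, and then use $\mu(\Lambda)=0$ together with the telescoping identity $\bigcup_{k\ge0}f^k(W)=X\setminus\Lambda$ to conclude exhaustiveness. The paper's version is simply terser, citing back to the computations already done in the proof of Proposition~\ref{finmes} rather than reproducing them.
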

		\begin{proof} Let $W=X {\setminus} f(X)$. By the proof of Proposition \ref{finmes}, $\{f^k(W):k\ge 0\}$ are pairwise disjoint measurable sets. Moreover, $f^k(W)=\emptyset$ for every $k<0$, which implies that
		$\{f^k(W):k\in\mathbb{Z}\}$ are pairwise disjoint. Moreover, by $(\ref{eqw2})$, $W$ is an exhaustive wandering set, thus $f$ is $\mu$-dissipative.
		\end{proof}
		
		The following example shows that the notion of $\mu$-dissipativity  embraces transformations that are not $\mu$-contractions.
		
		\begin{example} Let $X=[0,1]$ be the unit interval endowed with the Lebesgue measure $\mu$. The following are examples of $\mu$-dissipative transformations that are not $\mu$-contractions:
		\begin{itemize}
		\item [(a)] $f:X\to X$ defined by $f(x)=\log\,(1+x)$.
		\item [(b)] $f:X\to X$ defined by $x\mapsto \frac{x}{2}$ on $\left[0,\frac12\right)$ and $x\mapsto \frac{3}{2}x-\frac{1}{2}$ on $\left[\frac12,1\right)$.
		\end{itemize}
		\end{example}
		\begin{proof} \noindent (a) Let $a_k=f^k(1)$, $k=0,1,2,\ldots$, then $\{a_k\}$ is a strictly decreasing sequence bounded below by $0$. Let $a=\lim_{k\to\infty} a_k$, then, by the continuity of $f$, we have that $a$ is the unique fixed point of $f$, that is, $a=0$. Let $W=[a_1,a_0)$, then $f^k(W)=\left[a_{k+1},a_k\right)$ and hence $W,f(W),\ldots$ are pairwise disjoint sets whose union is $\cup_{k\ge 0} f^k(W)=(0,1)$. Hence, $W$ is an exhaustive wandering set and $f$ is dissipative. Moreover, $f'(0)=1$, which prevents $f$ from being a $\mu$-contraction.\\
 
 \noindent (b) Let $a_k=f^k\left(\frac12\right), k\in\mathbb{Z}$. Then $\{a_k\}_{k=0}^\infty$ is decreasing while $\{a_k\}_{k=0}^{-\infty}$ is increasing. By arguing as in the item (a), we may show that $\lim_{k\to \infty} a_k=0$ and $\lim_{k\to -\infty} a_k=1$. Let $W=\left[\frac12,f\left(\frac12\right)\right)$, then $W$ is an exhaustive wandering set and $f$ is $\mu$-dissipative. Moreover, since $\mu\left( f(X)\right)=\mu(X)=1$, we have that $f$ is not a $\mu$-contraction.
 \end{proof}

		We conclude this paper by showing that the assumption that $f$ is one-to-one in the statement of our results cannot be weakened.
		
		We say that $f$ is {\it essentially one-to-one} if there does not exist any disjoint measurable sets $A,B$ of positive $\mu$-measure such that
		$f(A)=f(B)$. The next result states that, within the world of bi-Lipschitz $\mu$-transformations, the property of being essentially one-to-one is a necessary condition for topological transitivity.
		
		\begin{lemma}\label{lex1}
		Let $(X,\mathcal B,\mu)$ be a finite measure space and $f:X\to X$ be a bi-Lipschitz $\mu$-transformation. If $T_f$ is topologically transitive, then $f$ is essentially one-to-one.  
		\end{lemma}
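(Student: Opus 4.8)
The plan is to prove the contrapositive and to reduce everything to the failure of \emph{dense range}. Recall from the proof of Theorem \ref{thm1} that a topologically transitive $T_f$ has dense range, and from \cite[Corollary 2.2.8]{RM} that $T_f$ has dense range if and only if $f^{-1}(\mathcal B)=\mathcal B$ modulo $\mu$-null sets. Hence it suffices to show that if $f$ is \emph{not} essentially one-to-one, then $f^{-1}(\mathcal B)\neq\mathcal B$. So I would start by assuming there are disjoint measurable sets $A,B$ with $\mu(A),\mu(B)>0$ and $f(A)=f(B)=:D$; since $f$ is bimeasurable, $D$ and all the images appearing below are measurable. The goal is then to exhibit a member of $\mathcal B$, namely $A$ itself, that does not belong to $f^{-1}(\mathcal B)$ modulo null sets.

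First I would record the two structural consequences of $f(A)=f(B)=D$ that the argument rests on. Literally, $f(a)\in D$ for every $a\in A$, so $A\subseteq f^{-1}(D)$ (and likewise $B\subseteq f^{-1}(D)$). Moreover, because $f$ is a bi-Lipschitz $\mu$-transformation, $\mu\big(f(N)\big)\le c_2\,\mu(N)$ for every measurable $N$ by \eqref{blc}; in particular $f$ maps null sets to null sets. This last point is the crux, since it is what will let me discard an arbitrary null set from $B$ without destroying the equality $f(B)=D$ up to measure zero.

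The core step is an argument by contradiction. Suppose $A=f^{-1}(E)$ modulo a null set for some $E\in\mathcal B$. Since $A\subseteq f^{-1}(D)$, the set $f^{-1}(E\setminus D)$ is null, so after replacing $E$ by $E\cap D$ I may assume $E\subseteq D$. Now $\mu\big(B\cap f^{-1}(E)\big)=\mu(B\cap A)=0$ because $A\cap B=\emptyset$; put $N=B\cap f^{-1}(E)$, a null set. For every $x\in B\setminus N$ we have $f(x)\notin E$, so $f(B\setminus N)\subseteq D\setminus E$. On the other hand $D=f(B)\subseteq f(B\setminus N)\cup f(N)\subseteq (D\setminus E)\cup f(N)$, and intersecting with $E$ gives $E\subseteq f(N)$, whence $\mu(E)\le\mu\big(f(N)\big)=0$. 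Finally, nonsingularity of $f$ (valid for bi-Lipschitz $\mu$-transformations) yields $\mu(A)=\mu\big(f^{-1}(E)\big)=0$, contradicting $\mu(A)>0$. Therefore $A\notin f^{-1}(\mathcal B)$ modulo null sets, so $f^{-1}(\mathcal B)\neq\mathcal B$, the operator $T_f$ fails to have dense range, and consequently $T_f$ is not topologically transitive.

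The step I expect to demand the most care is the mod-null bookkeeping in the core step, specifically the passage from $f(B\setminus N)\subseteq D\setminus E$ to $\mu(E)=0$: it is precisely here that the bi-Lipschitz upper bound is indispensable, for without $\mu\big(f(N)\big)=0$ one could not guarantee that $B\setminus N$ still covers $D$ up to measure zero, and the whole inference would break down. Everything else is routine once the equivalence with dense range is in place; I note in passing that finiteness of $\mu$ is not actually needed for this implication—only nonsingularity and the upper Lipschitz bound are used.
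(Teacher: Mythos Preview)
Your argument is correct and takes a genuinely different route from the paper's. The paper proves non-density of $T_f(\mathcal X)$ directly: assuming $A,B$ disjoint with $f(A)=f(B)$, it shows that $\rchi_A$ cannot be approximated by any $\varphi\circ f$, using (H1), (H3) and the upper bi-Lipschitz bound to argue that if $\varphi\circ f$ were close to $\rchi_A$ then the sets $\{y\in f(A):|\varphi(y)|>\tfrac12\}$ and $\{y\in f(B):|\varphi(y)|<\tfrac12\}$ would both occupy more than half the measure of $f(A)=f(B)$, a contradiction. You instead invoke the $\sigma$-algebra criterion from \cite[Corollary 2.2.8]{RM} (already used in the proof of Theorem~\ref{thm1}) and show by a clean measure-theoretic argument that $A$ cannot agree with any $f^{-1}(E)$ modulo null sets; the upper bound in \eqref{blc} enters only to guarantee $\mu\big(f(N)\big)=0$, and the lower bound gives nonsingularity for the final step $\mu(E)=0\Rightarrow\mu(f^{-1}(E))=0$. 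Your approach is more conceptual and, as you note, makes transparent that finiteness of $\mu$ is irrelevant; the paper's approach is more self-contained within the axiomatics (H1)--(H4) and avoids the external citation at this point.
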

		\begin{proof} Suppose that $f$ is not essentially one-to-one, i.e., that there are disjoint sets $A$ and $B$ with $\mu(A)\cdot\mu(B) >0$ and $f(A) = f(B)$. We will show that $T_f$ is not topologically transitive by proving that $T_f(\mathcal{X})$ is actually not dense in $\mathcal{X}$. Remember that $\mathcal{X}\subset L^0(\mu)$ is any  Banach space of functions satisfying (H1)-(H4).
			
			As $\mu(A)<\infty$, $\rchi_{A}\in \mathcal{X}$ by (H1). Let $\epsilon>0$ be such that $c_2\epsilon<\frac12 \mu\big(f(B)\big)$, where $c_2>0$ is as in $(\ref{blc})$. Let $\delta>0$ be such that (H3) holds with the previously fixed  $\epsilon$. We will show that there is no $\varphi \in \mathcal{X}$ with $ \left\Vert \varphi\circ f - \rchi_A \right\Vert <\delta/2$. To obtain a contradiction assume that such a $\varphi$ exists and let $\psi=2\big(\varphi\circ f-\rchi_A\big)$. Let $B' =\left\{ x\in B:   \left|\varphi(f(x))\right| \ge \frac12\right\}.$ As $B'\subset B\subset X{\setminus} A$, we have that $\rchi_A=0$ on $B'$, thus $\left\vert\psi\right\vert\ge 1$ on $B'$. Moreover, by the choice of $\varphi$, we have that $\Vert \psi\Vert<\delta$. By (H3) applied to $S=B'$, we have that
			$\mu(B')<\epsilon$. By $(\ref{blc})$, 
	$\mu\left(f(B')\right)\le c_2\epsilon<  \frac{1}{2} \mu \left(f(B)\right).$ Let
			\begin{equation*}
			C= \left\{y \in f(B): |\varphi(y)| < \dfrac{1}{2}\right\}=f(B){\setminus} f(B'),\quad\textrm{thus}\quad \mu(C)> \frac12 \mu\left(f(B)\right).
			\end{equation*}
			Analogously, let 
			$$ A' =\left\{ x\in A:   |\varphi(f(x))| \le \frac12 \right \}\quad\textrm{and}\quad
			D= \left\{y \in f(A): |\varphi(y)|  > \dfrac{1}{2}\right\}=f(A){\setminus} f(A').
			$$
			Arguing exactly as above, we may ensure that $\mu(D)>\frac 12\mu\big(f(A)\big)$. Since $f(A) = f(B)$, we have that $C$ and $ D$ are two disjoint subsets of $f(B)$ both
			with measure greater than $\frac{1}{2} \mu (f(B))$, yielding a contradiction.
		\end{proof} 
		
		Lemma $\ref{lex1}$ leads to the following explicit example, which satisfies Conditions (C4) and (D4) but not the requirement that $f$ is one-to-one.
		
		\begin{example} Let $X=[0,1]$ be endowed with the Lebesgue measure $\mu$ and $f:X\to X$ be the bi-Lipschitz $\mu$-contraction defined by $x\mapsto \frac{x}{2}$ on $\left[0,\frac12\right)$ and $x\mapsto \frac{x}{2}-\frac14$ on $\left[\frac12,1\right]$, then $\limsup_{k\to\infty}{\mu\left( f^k(X)\right)}=0$
			but
			$T_f$ is not topologically transitive.
		\end{example}

\bibliographystyle{plain}
\bibliography{references.bib}

\begin{thebibliography}{10}

\bibitem{BM}
F.~Bayart and {\'E}.~Matheron.
\newblock {\em Dynamics of linear operators}, volume 179 of {\em Cambridge
  Tracts in Mathematics}.
\newblock Cambridge University Press, Cambridge, 2009.

\bibitem{bernardes2}
N.~C. Bernardes, Jr., A.~Bonilla, V.~M{\"u}ller, and A.~Peris.
\newblock Distributional chaos for linear operators.
\newblock {\em J. Funct. Anal.}, 265(9):2143--2163, 2013.

\bibitem{bernardes1}
N.~C. Bernardes, Jr., A.~Bonilla, V.~M{\"u}ller, and A.~Peris.
\newblock Li-{Y}orke chaos in linear dynamics.
\newblock {\em Ergodic Theory Dynam. Systems}, 35(6):1723--1745, 2015.

\bibitem{BonDarPia}
D.~Bongiorno, U.~B. Darji, and L.~Di~Piazza.
\newblock Rolewicz-type chaotic operators.
\newblock {\em J. Math. Anal. Appl.}, 431(1):518--528, 2015.

\bibitem{DaSi}
A.I. Danilenko and C.E. Silva.
\newblock Ergodic theory: non-singular transformations.
\newblock In {\em Mathematics of complexity and dynamical systems. {V}ols.
  1--3}, pages 329--356. Springer, New York, 2012.

\bibitem{EHIP}
S.~Eigen, A.~Hajian, Y.~Ito, and V.~Prasad.
\newblock {\em Weakly wandering sequences in ergodic theory}.
\newblock Springer Monographs in Mathematics. Springer, Tokyo, 2014.

\bibitem{grivaux}
S.~Grivaux.
\newblock Sums of hypercyclic operators.
\newblock {\em J. Funct. Anal.}, 202(2):486--503, 2003.

\bibitem{grosse}
K.-G. Grosse-Erdmann and A.~Peris~Manguillot.
\newblock {\em Linear chaos}.
\newblock Universitext. Springer, London, 2011.

\bibitem{Kal07}
T.~Kalmes.
\newblock Hypercyclic, mixing, and chaotic {$C_0$}-semigroups induced by
  semiflows.
\newblock {\em Ergodic Theory Dynam. Systems}, 27(5):1599--1631, 2007.

\bibitem{Nor68}
E.A. Nordgren.
\newblock Composition operators.
\newblock {\em Canad. J. Math.}, 20:442--449, 1968.

\bibitem{KP}
K.~Petersen.
\newblock {\em Lectures on Ergodic Theory}.
\newblock 1997.

\bibitem{AP}
A.~Przestacki.
\newblock {\em Composition operators on the space of smooth functions}.
\newblock 2015.
\newblock PhD Dissertation in Mathematics, Faculty of Mathematics and Computer
  Science, Adam Mickiewicz University in Pozn\'an, 2015.

\bibitem{S1}
H.N. Salas.
\newblock Hypercyclic weighted shifts.
\newblock {\em Trans. Amer. Math. Soc.}, 347(3):993--1004, 1995.

\bibitem{JHS}
J.~Shapiro.
\newblock {\em Composition operators and classical function theory}.
\newblock Universitext: Tracts in Mathematics. Springer-Verlag, New York, 1993.

\bibitem{RM}
R.~K. Singh and J.~S. Manhas.
\newblock {\em Composition operators on function spaces}, volume 179 of {\em
  North-Holland Mathematics Studies}.
\newblock North-Holland Publishing Co., Amsterdam, 1993.

\end{thebibliography}

\end{document}